\newcommand{\erre}{\mathds{R}}
\newcommand{\enne}{\mathds{N}}
\newcommand{\diver}{\operatorname{div}}
\newcommand{\ra}{\rightarrow}
\newcommand{\set}[1]{{\left\{#1\right\}}}               % tra parentesi graffe
\newcommand{\pa}[1]{{\left(#1\right)}}                  % tra tonde
\newcommand{\sq}[1]{{\left[#1\right]}}                  % tra quadre
\newcommand{\abs}[1]{{\left|#1\right|}}                 % valore assoluto
\newcommand{\pair}[1]{\left\langle#1\right\rangle}      % pairing
\newcommand{\eps}{\varepsilon}                           % epsilon
\renewcommand{\tilde}[1]{\widetilde{#1}}
\newtheorem{theorem}{\textbf{Theorem}}[section]
\newtheorem{lemma}[theorem]{\textbf{Lemma}}
\newtheorem{proposition}[theorem]{\textbf{Proposition}}
\newtheorem{cor}[theorem]{\textbf{Corollary}}
\newtheorem{defi}[theorem]{\textbf{Definition}}
\theoremstyle{remark}
\newtheorem{rem}[theorem]{\textbf{Remark}}
\newtheorem{exe}[theorem]{\textbf{Example}}
\numberwithin{equation}{section}
\title[]
{Nonexistence results for elliptic differential inequalities with
a potential on Riemannian manifolds}
\date{\today} \linespread{1.2}
\keywords{}
\subjclass[2010]{35B09; 35B53; 35J62; 58J05   }
\begin{document}

\maketitle

\begin{center}
\textsc{\textmd{P. Mastrolia\footnote{Universit\`{a} degli Studi
di Milano, Italy. Email: paolo.mastrolia@unimi.it. Partially
supported by FSE, Regione Lombardia.}, D. D.
Monticelli\footnote{Universit\`{a} degli Studi di Milano, Italy.
Email: dario.monticelli@unimi.it.} and F.
Punzo\footnote{Universit\`{a} degli Studi di Milano, Italy. Email:
fabio.punzo@unimi.it. \\ The three authors are supported by GNAMPA
project ``Analisi globale ed operatori degeneri'' and are members
of the Gruppo Nazionale per l'Analisi Matematica, la
Probabilit\`{a} e le loro Applicazioni (GNAMPA) of the Istituto
Nazionale di Alta Matematica (INdAM). }, }}
\end{center}
\begin{abstract}
In this paper we are concerned with a class of elliptic
differential inequalities with a potential both on $\erre^m$ and
on Riemannian manifolds. In particular, we investigate the effect
of the geometry of the underlying manifold and of the behavior of
the potential at infinity on nonexistence of nonnegative
solutions.
\end{abstract}

%\tableofcontents

\section{Introduction}

One of the most important and well-studied class of elliptic
differential inequalities in Global Analysis, due to its
ubiquitous presence in many applications, is
\begin{equation}\label{EQ_lapl}
 \Delta u+V(x)u^\sigma \leq 0,
\end{equation}
both on $\erre^m$ and on general Riemannian manifolds $(M,g)$,
where $\Delta$ denotes the Laplace-Beltrami operator associated to
the metric and $\sigma>1$. In particular, in many instances it is
also required that the solution $u$ of the problem is positive.

The aim of this paper is to investigate in depth the influence of
the geometry of the underlying complete, noncompact Riemannian
manifold $(M,g)$ of dimension $m$ and of the potential $V$ on the
existence of positive solutions to the class of elliptic
differential inequalities
\begin{equation}\label{EQ_gen}
 \frac{1}{a(x)}\diver\pa{a(x)\abs{\nabla u}^{p-2}\nabla u}+V(x)u^\sigma
 \leq0,
 \end{equation}
thus highlighting the interplay between analysis and geometry in
this class of problems, which includes \eqref{EQ_lapl}. Here and
in the rest of the paper we assume that $a : M \ra \erre$
satisfies
\begin{equation}\label{EQ_propr_a}
  a>0, \quad a \in \operatorname{Lip}_{\text{loc}}(M),
\end{equation}
$V>0$ a.e. on $M$ and $V\in L^1_{\text{loc}}(M)$, and the
constants $p$ and $\sigma$ satisfy $p>1$, $\sigma>p-1$. In our
results, the geometry of $M$ appears through conditions on the
growth of suitably weighted volumes of geodesic balls, involving
both the potential $V$ and the function $a$. We explicitly note
that some of the results we find are new also in the specific case
of the model equation \eqref{EQ_lapl}.

This class of problems has a very long history, particularly in
the Euclidean setting, starting with the seminal works of Gidas
\cite{Gidas} and Gidas-Spruck \cite{GidasSpruck}. In those papers
the authors show, among other results, that any nonnegative
solution of equation \eqref{EQ_lapl} is in fact identically null
if and only if $\sigma\leq \frac{m}{m-2}$, in case $V\equiv 1$ and
$m\geq 3$.

We refer to the interesting papers of Mitidieri-Pohozaev
\cite{MitPohoz359}, \cite{MitPohoz227}, \cite{MitPohozAbsence},
\cite{MitPohozMilan} for a comprehensive description of results
related to these (and also more general) problems on $\erre^m$.
Note that analogous results have also been obtained for degenerate
elliptic equations and inequalities (see, e.g., \cite{DaLu},
\cite{Mont}), and for the parabolic companion problems (see, e.g.,
\cite{MitPohozMilan}, \cite{PoTe}, \cite{P1}, \cite{PuTe}). Using
nonlinear capacity arguments, which exploit suitably chosen test
functions, Mitidieri-Pohozaev prove nonexistence of weak or
distributional solutions  of wide classes of differential
inequalities in $\erre^m$, which include also many of the examples
that we consider here. In particular, they show that equation
\eqref{EQ_lapl} on $\erre^m$ does not admit any nontrivial
nonnegative solution, provided that
\begin{equation}\label{28}
\liminf_{R\rightarrow+\infty}R^{-\frac{2\sigma}{\sigma-1}}\int_{B_{\sqrt{2}R}\setminus
B_R}V^{-\frac{1}{\sigma-1}}dx<\infty.
\end{equation}
For the case of equation \eqref{EQ_gen} on $\erre^m$, the authors
show that no nonnegative, nontrivial solution exists in case
$V\equiv1$, $m>p$ and $p-1<\sigma\leq\frac{m(p-1)}{m-p}$. This can
again be read as a condition relating the volume growth of
Euclidean balls, which depends on the dimension $m$ of the space,
and the exponent of the nonlinearity in the equation.

%***Nota per noi... non sembrano trattare esplicitamente: il caso p=2
%con contemporaneamente a(x) e V(x) non costanti (ma fanno sia casi
%che noi non copriamo che il caso in cui c'e V(x) da solo o a(x) da
%sola); per p diverso da 2 il caso con V(x) (ma fanno sia casi che
%noi non copriamo che il caso in cui c'e a(x) da sola, ma
%limitata)***

The results in the case of a complete Riemannian manifold have a
more recent history, in particular we cite the inspiring papers of
Grygor'yan-Kondratiev \cite{GrigKond} and Grygor'yan-Sun
\cite{GrigS}, whose approach originates from the  work of Kurta
\cite{Kurta}. Using a capacity argument which only exploits the
gradient of the distance function from a fixed reference point,
%and assumptions on a suitably weighted volume growth of geodesic balls,
in particular the authors showed in \cite{GrigKond} that equation
\eqref{EQ_gen}, in case $p=2$,  admits a unique nonnegative weak
solution provided that there exist positive constants $C$, $C_0$
such that for every $R>0$ sufficiently large and every small enough
$\eps>0$,
\begin{equation}\label{Eq_noimp}
\int_{B_R}a V^{-\beta+\eps}\,d\mu_0 \leq C
R^{\alpha+C_0\eps}\pa{\log R}^k,
\end{equation}
where $d\mu_0$ is the canonical Riemannian measure on $M$, $B_R$ is
the geodesic ball centered at a point $x_0\in M$ and
\[
\alpha = \frac{2\sigma}{\sigma-1}, \quad \beta = \frac{1}{\sigma-1}, \quad 0 \leq k <\beta.
\]
Let $r(x)$ denote the geodesic distance of a point $x\in M$ from a
fixed origin $o\in M$. Note that condition \eqref{Eq_noimp} is
satisfied if, for instance, for every $R>0$ large enough one has
\begin{equation*}
    V(x) \leq C \pa{1+r(x)}^{C_0}
    \end{equation*}
   and
  \begin{equation*}
  \int_{B_R} V^{-\beta}\,d\mu \leq C R^{\alpha}\pa{\log R}^k
   \end{equation*}
for some positive constants $C$, $C_0$. The sharpness of the
exponent $\alpha$ in this type of results is evident from the
Euclidean case \eqref{EQ_lapl} with $V\equiv 1$ and $m\geq 3$,
where $\alpha=m$ and the corresponding critical growth is $\sigma
=\frac{m}{m-2}$. The sharpness of the exponent $\beta$ is
definitely a more delicate
 question and has recently been settled on a general Riemannian
manifold, in case $a\equiv1$ and $V\equiv 1$, in \cite{GrigS}. In
that paper, using carefully chosen families of test functions, the
authors showed that equation \eqref{EQ_lapl} with $V\equiv 1$ does
not admit any nonnegative weak solution provided \eqref{Eq_noimp}
holds with $k=\beta$.

In this work we intend to further focus our attention on these
classes of differential inequalities, with the objective of adding
some new results to the already very interesting overall picture.
Our results concerning equation \eqref{EQ_lapl}, in their simplest
form, are contained in the two following theorems.

\begin{theorem}\label{thm_intro1}
Let $(M,g)$ be a complete Riemannian manifold, $\sigma>1$, $V\in
L^1_\text{loc}(M)$ with $V>0$ a.e. on $M$. Define
\begin{equation}\label{30}
\alpha = \frac{2\sigma}{\sigma-1}, \quad \beta = \frac{1}{\sigma-1}
\end{equation}
and assume that there exist $C$, $C_0>0$ such that for every $R>0$
sufficiently large one has
  \begin{equation}\label{27b}
  \int_{B_R\setminus B_{R/2}} V^{-\beta}\,d\mu \leq C R^{\alpha}\pa{\log R}^\beta
   \end{equation}
   and
\begin{equation}\label{27a}
    \frac{1}{C}\pa{1+r(x)}^{-C_0} \leq V(x) \leq C
    \pa{1+r(x)}^{C_0}\qquad\text{ a.e. on }M.
    \end{equation}
%\begin{equation}\label{27}
%\int_{B_{R}\setminus B_{R/2}}V^{-\beta+\eps}\,d\mu_0\leq
%CR^{\alpha+C_0\eps}(\log R)^\beta,\qquad \int_{B_{R}\setminus
%B_{R/2}}V^{-\beta-\eps}\,d\mu_0\leq CR^{\alpha+C_0\eps}(\log
%R)^\beta.
%\end{equation}
Then the only nonnegative weak solution of \eqref{EQ_lapl} is
$u\equiv0$.
\end{theorem}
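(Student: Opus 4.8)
I would run a nonlinear capacity (test function) argument in the spirit of Mitidieri--Pohozaev and Grygor'yan--Sun, the delicate point being a double balance of parameters that makes the \emph{critical} exponent $\beta$ admissible in \eqref{27b}. Fix an origin $o\in M$, put $r(x)=\dist(x,o)$, and recall that the weak form of \eqref{EQ_lapl} reads $\int_M\pair{\nabla u,\nabla\varphi}\,d\mu\ge\int_M Vu^\sigma\varphi\,d\mu$ for all nonnegative $\varphi\in W^{1,2}_{\mathrm{loc}}(M)$ with compact support.

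\textbf{Step 1 (the basic inequality).} For a Lipschitz cutoff $\psi$ with compact support, $0\le\psi\le1$, for $q$ large, $s\in(0,1)$ and $\eps>0$, I would test with the admissible function $\varphi=\psi^q(u+\eps)^{-s}$. Since $\nabla\varphi=q\psi^{q-1}(u+\eps)^{-s}\nabla\psi-s\psi^q(u+\eps)^{-s-1}\nabla u$, the term $s\int\psi^q(u+\eps)^{-s-1}\abs{\nabla u}^2$ appears with the favourable sign; bounding the mixed term by Young's inequality, absorbing, and letting $\eps\downarrow0$ (monotone convergence on the left, dominated convergence on the right via $(u+\eps)^{1-s}\le 1+u+\eps$) gives
\[
\int_M Vu^{\sigma-s}\psi^q\,d\mu\ \le\ \frac{q^2}{2s}\int_M\psi^{q-2}u^{1-s}\abs{\nabla\psi}^2\,d\mu\ <\ \infty .
\]
I would then apply Hölder's inequality to the right-hand side with the conjugate exponents $\tfrac{\sigma-s}{1-s}$ and $p':=\tfrac{\sigma-s}{\sigma-1}$, arranged so that the first factor reproduces the left-hand integral restricted to $\supp\abs{\nabla\psi}$:
\[
\int_M Vu^{\sigma-s}\psi^q\,d\mu\ \le\ \frac{q^2}{2s}\left(\int_{\supp\abs{\nabla\psi}}Vu^{\sigma-s}\psi^q\,d\mu\right)^{\!\frac{1-s}{\sigma-s}}\! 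K_\psi^{\frac{\sigma-1}{\sigma-s}},\qquad K_\psi:=\int_M\psi^{q-2p'}\abs{\nabla\psi}^{2p'}V^{-\frac{1-s}{\sigma-1}}\,d\mu ,
\]
with $2p'=\tfrac{2(\sigma-s)}{\sigma-1}<\alpha$, so that the exponent of $\psi$ in $K_\psi$ is nonnegative once $q>\alpha$.

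\textbf{Step 2 (the cutoff and the estimate of $K_\psi$; the main obstacle).} Following Grygor'yan--Sun I would \emph{not} use a single annulus but a logarithmic, multi-scale cutoff: for $R$ large and $N\in\enne$, let $\psi\equiv1$ on $B_R$, $\psi\equiv0$ outside $B_{R^N}$, and $\psi(x)=\tfrac{\log(R^N/r(x))}{(N-1)\log R}$ on $B_{R^N}\setminus B_R$, so that $\abs{\nabla\psi}=\tfrac{1}{(N-1)(\log R)\,r}$ there. Splitting $B_{R^N}\setminus B_R$ into the dyadic shells $B_{2^{k+1}}\setminus B_{2^k}$, on each of them the upper bound in \eqref{27a} lets me write $V^{-\frac{1-s}{\sigma-1}}=V^{-\beta}V^{\frac{s}{\sigma-1}}\le C(1+r)^{\frac{C_0 s}{\sigma-1}}V^{-\beta}$, and then \eqref{27b} gives $\int_{B_{2^{k+1}}\setminus B_{2^k}}V^{-\beta}\,d\mu\le C\,2^{k\alpha}(k+1)^{\beta}$. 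Because $\alpha-2p'=\tfrac{2s}{\sigma-1}$, the $k$-th shell contributes at most $C(k+1)^{\beta}2^{k\gamma_s}$ with $\gamma_s=\tfrac{(2+C_0)s}{\sigma-1}>0$, and summing over the $O(N\log R)$ shells produces $\le C\gamma_s^{-1}(N\log R)^{\beta}2^{\gamma_s N\log_2 R}$. The decisive choice is $s=s(R,N)=\tfrac{c}{N\log R}$ with $c>0$ small: it turns both $2^{\gamma_s N\log_2 R}$ and the radius factor $(1+r)^{C_0 s/(\sigma-1)}$ into $O(1)$, so that, together with $\abs{\nabla\psi}^{2p'}\sim((N-1)\log R)^{-2p'}r^{-2p'}$ and $2p'\to\alpha=2(1+\beta)$, one obtains $K_\psi\le C(N\log R)^{-(1+\beta)}$. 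Since the prefactor blows up only like $s^{-(\sigma-s)/(\sigma-1)}$, this yields the two uniform (in $R$) bounds
\[
\Bigl(\tfrac{q^2}{2s}\Bigr)^{\frac{\sigma-s}{\sigma-1}}K_\psi\ \le\ C ,\qquad\qquad \tfrac{q^2}{2s}\,K_\psi^{\frac{\sigma-1}{\sigma-s}}\ \le\ C .
\]
I expect this to be the crux: the simultaneous cancellation of the $s^{-(\sigma-s)/(\sigma-1)}$ blow-up, of the power of $R$ produced by the polynomial growth of $V$, and of the telescoping of the $(\log)^\beta$-weighted volumes is precisely what consumes the critical exponent $k=\beta$ in \eqref{27b}, and what a cruder single-scale cutoff cannot achieve.

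\textbf{Step 3 (conclusion, in two passes).} First, estimating the restricted integral in Step 1 by $I_R:=\int_M Vu^{\sigma-s}\psi^q\,d\mu<\infty$ gives $I_R\le\tfrac{q^2}{2s}I_R^{(1-s)/(\sigma-s)}K_\psi^{(\sigma-1)/(\sigma-s)}$; if $I_R>0$ (otherwise $u\equiv0$ on $B_R$ and we are done) this self-improves to $\int_{B_R}Vu^{\sigma-s}\,d\mu\le I_R\le\bigl(\tfrac{q^2}{2s}\bigr)^{(\sigma-s)/(\sigma-1)}K_\psi\le C$, uniformly in $R$. Letting $R\to\infty$, so that $s=s(R,N)\downarrow0$, Fatou's lemma gives $\int_M Vu^\sigma\,d\mu\le C<\infty$. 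Second, I would rerun Step 1 keeping the genuine tail on the right and estimating it by Hölder (exponents $\tfrac{\sigma}{\sigma-s}$, $\tfrac{\sigma}{s}$),
\[
\int_{\supp\abs{\nabla\psi}}Vu^{\sigma-s}\psi^q\,d\mu\ \le\ \left(\int_{M\setminus B_R}Vu^{\sigma}\,d\mu\right)^{\!\frac{\sigma-s}{\sigma}}\left(\int_{B_{R^N}}V\,d\mu\right)^{\!\frac{s}{\sigma}},
\]
where the first factor tends to $0$ by the finiteness just proved and the second stays bounded: indeed \eqref{27a}--\eqref{27b} force polynomial volume growth, $\mathrm{vol}(B_{2\rho}\setminus B_\rho)\le(\sup V^\beta)\int V^{-\beta}\le C\rho^{\alpha+C_0\beta}(\log\rho)^\beta$, so $\int_{B_{R^N}}V\le CR^{N\kappa}(\log R)^{\beta}$ for some $\kappa$, and $s=c/(N\log R)$ makes the $s/\sigma$-power of this bounded. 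Combining with the second uniform estimate of Step 2, we get $\int_{B_R}Vu^{\sigma-s}\,d\mu\to0$ as $R\to\infty$, whence, by Fatou once more, $\int_M Vu^\sigma\,d\mu=0$. Since $V>0$ a.e., this forces $u\equiv0$.
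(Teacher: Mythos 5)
Your proof is correct, and its engine is the same one the paper uses (following Grigor'yan--Sun): a test function of the form $\psi^q(u+\eps)^{-s}$ with the parameter $s\sim 1/\log R$, a slowly varying multi-scale cutoff, and the balance between the blow-up $s^{-(1+\beta)}$ and the decay of the capacity integral that absorbs the critical factor $(\log R)^{\beta}$. The organization, however, is genuinely different. The paper obtains Theorem \ref{thm_intro1} as a corollary of Theorem \ref{thm_intro3} under condition (HP2), and in that route Lemma \ref{LE_tech2} keeps $\int_M Vu^{\sigma}\varphi^{s}\,d\mu$ on both sides of the self-improving inequality; the price is a second capacity factor involving $V^{-\beta-\Lambda}$ with $\Lambda>0$, which is exactly why the paper needs the second inequality in \eqref{EQ_hp2}, itself derived from the lower bound on $V$ in \eqref{27a}. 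You instead keep the exponent $\sigma-s$ throughout and, in your second pass, convert the tail to $\int_{M\setminus B_R}Vu^{\sigma}\,d\mu$ by an extra H\"older step against $\pa{\int_{B_{R^N}}V\,d\mu}^{s/\sigma}$; this is harmless because the upper bound in \eqref{27a} together with \eqref{27b} forces polynomial growth of $\int_{B_\rho}V\,d\mu$, and $s=c/(N\log R)$ tames any fixed power of $R^{N}$. As a result your capacity integrals only ever see $V$ to the power $-\beta+\tfrac{s}{\sigma-1}>-\beta$, so the lower bound in \eqref{27a} is never used: you prove a marginally stronger statement, at the cost of an argument tailored to this special case, whereas the paper's route yields the general $p$-Laplacian theorem and the abstract conditions (HP1)--(HP3). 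The remaining differences are cosmetic: your logarithmic interpolation on $B_{R^N}\setminus B_R$ plays the role of the paper's power-type cutoff $\pa{r/R}^{-C_1 t}$ truncated at scale $nR$ with $n\to\infty$, and your two-pass conclusion (first $u\in L^{\sigma}(M,V\,d\mu)$ by discarding the tail, then tail $\to 0$) mirrors part $(b)$ of the paper's proof exactly.
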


\begin{theorem}\label{thm_intro2}
With the same notation of Theorem \ref{thm_intro1}, assume that
there exist $C_0\geq0$, $k\geq 0$, $\theta>0$,
$\tau>\max\{\frac{\sigma-1}{\sigma}\pa{k+1},1\}$  such that for
every sufficiently large $R>0$
    \begin{equation}\label{29b}
     \int_{B_R\setminus B_{R/2}} V^{-\beta}\,d\mu \leq C R^{\alpha}\pa{\log R}^k
    \end{equation}
and
    \begin{equation}\label{29a}
    V(x) \leq C \pa{1+r(x)}^{C_0}e^{-\theta \pa{\log
    r(x)}^\tau}\qquad\text{ a.e. on }M.
    \end{equation}
Then the only nonnegative weak solution of \eqref{EQ_lapl} is
$u\equiv0$.
\end{theorem}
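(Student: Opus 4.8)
The plan is to argue by contradiction by the nonlinear capacity (test function) method, in the spirit of Mitidieri--Pohozaev and Grygor'yan--Kondratiev/Sun, refining it so that the critical exponent $\beta$ and the logarithmic factor in \eqref{29b} are compensated by the decay encoded in \eqref{29a}. Suppose $u\ge0$ is a weak solution of \eqref{EQ_lapl} with $u\not\equiv0$. Since $-\Delta u\ge Vu^\sigma\ge0$, $u$ is a nonnegative supersolution of the Laplace equation, so by the weak Harnack inequality (a purely local fact, valid on any complete manifold) together with a chaining argument one has $u>0$ a.e. and $\inf_K u>0$ on every compact $K$ (alternatively one runs the argument below with $u$ replaced by $u+\eps$ and lets $\eps\downarrow0$ at the end). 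This local positivity is what makes the forthcoming test function admissible, and it also guarantees $Vu^{\sigma-s}\in L^1_{\mathrm{loc}}(M)$ for every $s\in(0,1)$ since $Vu^\sigma\in L^1_{\mathrm{loc}}(M)$.

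Fix $s\in(0,1)$ and an integer $\lambda\ge\alpha$, and let $\xi=\xi_R\in\operatorname{Lip}_c(M)$ be the standard radial cutoff with $\xi_R\equiv1$ on $B_{R/2}$, $\xi_R\equiv0$ outside $B_R$, $0\le\xi_R\le1$ and $\abs{\nabla\xi_R}\le C/R$, so that $\supp\abs{\nabla\xi_R}\subset B_R\setminus B_{R/2}$. The first step is to test $-\Delta u\ge Vu^\sigma$ against $\varphi=\xi^\lambda u^{-s}\ge0$, which gives
\[
\int_M Vu^{\sigma-s}\xi^\lambda\,d\mu \;\le\; \lambda\int_M\xi^{\lambda-1}u^{-s}\,\nabla u\cdot\nabla\xi\,d\mu \;-\; s\int_M\xi^\lambda u^{-s-1}\abs{\nabla u}^2\,d\mu .
\]
Absorbing the first term on the right into the (negative) second one by Young's inequality, discarding the surviving gradient term, and then applying Hölder's inequality with conjugate exponents $\tfrac{\sigma-s}{1-s}$ and $\tfrac{\sigma-s}{\sigma-1}$ (so as to write $u^{1-s}=(u^{\sigma-s})^{(1-s)/(\sigma-s)}$), one arrives, after dividing by the finite quantity $\int_M Vu^{\sigma-s}\xi^\lambda\,d\mu$ (if it vanishes we are already done, as $V>0$ a.e.) and recalling the definitions \eqref{30} of $\alpha,\beta$, at a capacitary estimate of the form
\[
\int_{B_{R/2}}Vu^{\sigma-s}\,d\mu \;\le\; C(\lambda,s)\,R^{\,2s\beta-\alpha}\int_{B_R\setminus B_{R/2}}V^{-(1-s)\beta}\,d\mu ,
\]
the power of $R$ coming from $\abs{\nabla\xi_R}^{2(\sigma-s)/(\sigma-1)}\le(C/R)^{\alpha-2s\beta}$, and $(1-s)\beta$ being exactly $\beta$ degraded by the $\eps$-loss $s\beta$ familiar from \eqref{Eq_noimp}.

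The second step removes this loss by means of \eqref{29a}. Writing $V^{-(1-s)\beta}=V^{-\beta}\,V^{s\beta}$ and using that $R/2\le r(x)\le R$ on $B_R\setminus B_{R/2}$, the upper bound \eqref{29a} yields $V^{s\beta}\le C\,R^{sC_0\beta}\exp(-c_0\,s\,(\log R)^\tau)$ there for $R$ large, with $c_0>0$ depending only on $\theta,\beta,\tau$; combining this with \eqref{29b} gives
\[
\int_{B_{R/2}}Vu^{\sigma-s}\,d\mu \;\le\; C(\lambda,s)\,(\log R)^k\,\exp\!\pa{\,s\beta(2+C_0)\log R-c_0\,s\,(\log R)^\tau\,}.
\]
Since $\tau>1$, the term $(\log R)^\tau$ eventually dominates both $\log R$ and $\log\log R$, so the right-hand side tends to $0$ as $R\to+\infty$; as the left-hand side is nondecreasing in $R$, it must vanish identically, and then $V>0$ a.e. together with $\sigma-s>0$ force $u\equiv0$, a contradiction.

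I expect the main difficulties to be twofold. The first is the rigorous justification of the test function $\varphi=\xi^\lambda u^{-s}$ and of the integration by parts: this rests on the a priori local positivity and lower bound for $u$ (hence on the regularity/minimum-principle theory for supersolutions), or, alternatively, on the $u+\eps$ regularization with a careful passage to the limit; if instead one legitimizes the computation through a truncation $u\mapsto\min\set{u,N}$, one must control the error produced on $\set{u>N}$ and couple $N$ to the other parameters. The second is the bookkeeping that balances the $\eps$-loss $s\beta$ in the exponent of $V$, the factor $R^{sC_0\beta}(\log R)^k$ generated by hypotheses \eqref{29a}--\eqref{29b}, and the exponential gain $\exp(-c_0\,s\,(\log R)^\tau)$; carrying this out carefully — in particular with the choice of $s$ (and, in the truncation approach, of $N$) possibly depending on $R$ — is what dictates the quantitative threshold $\tau>\max\set{\tfrac{\sigma-1}{\sigma}\pa{k+1},1}$.
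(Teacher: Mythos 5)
Your argument is correct, but it takes a genuinely different route from the paper's. The paper obtains Theorem \ref{thm_intro2} as the case $p=2$ of condition (HP3) in Theorem \ref{thm_intro3}: there the exponent in the test function is $t=\frac1{\log R}$, hence sent to $0$ together with $R\to\infty$; the cutoff is the Grigor'yan--Sun two--scale function $\varphi_n=\eta_n\cdot(r/R)^{-C_1t}$ supported in $B_{2nR}$, with $n\to\infty$ taken first; and the decay of $V$ enters only through the factor $e^{-\eps\theta(\log R)^\tau}$ with $\eps\sim t$, which after the substitution $\rho=\bigl(\tfrac{t\theta}{\sigma-1}\bigr)^{1/\tau}\xi$ converts the would-be loss $t^{-(k+1)}$ into $t^{-(k+1)/\tau}$; the threshold $\tau>\tfrac{\sigma-1}{\sigma}(k+1)$ is exactly what makes the final power of $t$ positive before Fatou is applied. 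You instead freeze the loss parameter $s$, use the classical Mitidieri--Pohozaev annular cutoff, and let \eqref{29a} act at full strength on $B_R\setminus B_{R/2}$: your exponent bookkeeping is right (Hölder with $\tfrac{\sigma-s}{1-s}$ and $\tfrac{\sigma-s}{\sigma-1}$ gives $V^{-(1-s)\beta}$ and $\abs{\nabla\xi}^{\alpha-2s\beta}$, and $\int_{B_R\setminus B_{R/2}}V^{-(1-s)\beta}\,d\mu$ is finite by \eqref{29a}--\eqref{29b}), and the gain $e^{-c_0s(\log R)^\tau}$ with $s$ fixed crushes $R^{s\beta(2+C_0)}(\log R)^k$ for any $k$, using only $\tau>1$. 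So you in fact prove more than is stated, never invoking $\tau>\tfrac{\sigma-1}{\sigma}(k+1)$. This is not a contradiction: \eqref{29a} and \eqref{29b} together force $\mu(B_R\setminus B_{R/2})\leq CR^{\alpha+\beta C_0}(\log R)^k e^{-\beta\theta(\log(R/2))^\tau}$, hence finite volume and parabolicity of $M$, consistently with your stronger conclusion; the $k$-dependent threshold in the paper is tied to its $t\to0$ mechanism, which is designed to work under the purely integral hypothesis \eqref{EQ_hp3} with $\eps\to0$. The one step you must still make airtight is the admissibility of $\varphi=\xi^\lambda u^{-s}$: the weak Harnack/minimum-principle route is legitimate for \eqref{EQ_lapl}, but if you prefer the regularization $u_\eta=u+\eta$, note that testing produces $\int Vu^\sigma u_\eta^{-s}$ on the left while Hölder produces $\int Vu_\eta^{\sigma-s}$ on the right, and reconciling these two quantities as $\eta\to0^+$ is precisely the purpose of the Fatou argument in the paper's Lemma \ref{LE_tech1}.
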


A few remarks are now in order. For equation \eqref{EQ_lapl} on
$\erre^m$, condition \eqref{27a} is not required in the results of
Mitidieri-Pohozaev. On the other hand, the potential
$V(x)=\pa{\log(2+|x|^2)}^{-1}$ and the choice of the exponent
$\sigma=\frac{m}{m-2}$ with $m\geq 3$ satisfy the conditions of
our Theorem \ref{thm_intro1}, but they do not satisfy condition
\eqref{28}.

Moreover,  Theorem \ref{thm_intro1} and Theorem \ref{thm_intro2}
extend the result contained in \cite{GrigS} for problem
\eqref{EQ_lapl} on a complete Riemannian manifold $(M,g)$, where
only the case of a constant potential is considered, to the case of
a nonconstant $V$.

On the other hand, while (as we have already noted) the exponent
$\alpha=\frac{2\sigma}{\sigma-2}$ in the power of $R$ in
conditions \eqref{27b} and \eqref{29b} is indeed sharp, Theorem
\ref{thm_intro2} also shows that the sharpness of the exponent of
the term $\log R$ for this type of results is a notion which is
also related to the behavior of the potential $V$ at infinity. In
particular, if $V$ decays at infinity faster than any power of
$r(x)$, as in condition \eqref{29a}, then the critical threshold
for the power of the logarithmic term in the volume growth
condition \eqref{29b} for the nonexistence of nonnegative,
nontrivial solutions of \eqref{EQ_lapl} correspondingly increases
to $\frac{\sigma\tau}{\sigma-1}-1>\beta=\frac{1}{\sigma-1}$. We
explicitly  note that the type of phenomenon described in Theorem
\ref{thm_intro2} has not been pointed out before in literature, to
the best of our knowledge.

The aforementioned theorems are a consequence of more general
results concerning nonnegative weak solutions of inequality
\eqref{EQ_gen}, showing that similar phenomena also occur for this
larger class of problems, which includes the case of inequalities
involving the $p$-Laplace operator. We start with a definition
describing the weighted volume growth conditions on geodesic balls
of $(M,g)$, that will be used in obtaining the nonexistence
results for nonnegative solutions of \eqref{EQ_gen}. We recall
that with $d\mu_0$ we denote the canonical Riemannian measure on
$M$, while we define
\begin{equation}\label{meas_a}
d\mu = a\, d\mu_0
\end{equation}
the weighted measure on $M$ with density $a$.

\begin{defi}\label{defi_volgrowth}
  Let $p>1$, $\sigma>p-1$, $V>0$ a.e. on $M$ and $V\in
  L^1_{\text{loc}}(M)$. Define
  \begin{equation}\label{31}
  \alpha=\frac{p\sigma}{\sigma-p+1},  \qquad \beta=\frac{p-1}{\sigma-p+1}.
  \end{equation}
  We introduce the following three weighted volume growth conditions:
\begin{itemize}
    \item[i)] We say that \emph{condition (HP1)} holds if there exist $C_0>0$, $k\in [0, \beta)$ such that, for every $R>0$  sufficiently large and every $\eps>0$ sufficiently small,
    \begin{equation}\label{EQ_hp1}
      \int_{B_{R}\setminus B_{R/2}} V^{-\beta+\eps}\,d\mu \leq C R^{\alpha+C_0\eps}\pa{\log R}^k.
    \end{equation}
    \item[ii)] We say that \emph{condition (HP2)} holds if  there exists $C_0>0$ such that, for every $R>0$  sufficiently large and every $\eps>0$ sufficiently small,
    \begin{equation}\label{EQ_hp2}
      \int_{B_{R}\setminus B_{R/2}} V^{-\beta+\eps}\,d\mu \leq C R^{\alpha+C_0\eps}(\log R)^\beta \qquad \text{ and } \qquad \int_{B_{R}\setminus B_{R/2}} V^{-\beta-\eps}\,d\mu \leq C R^{\alpha+C_0\eps}(\log R)^\beta.
    \end{equation}
       \item[iii)] We say that \emph{condition (HP3)} holds if there exist $C_0\geq0$, $k\geq 0$, $\theta>0$, $\tau>\max\{\frac{\sigma-p+1}{\sigma}\pa{k+1},1\}$  such that, for every  sufficiently large $R>0$  and every $\eps>0$ sufficiently small,
    \begin{equation}\label{EQ_hp3}
      \int_{B_{R}\setminus B_{R/2}} V^{-\beta+\eps}\,d\mu \leq C R^{\alpha+C_0\eps}\pa{\log R}^k e^{-\eps \theta \pa{\log
      R}^\tau}.
    \end{equation}
  \end{itemize}
\end{defi}
We explicitly note that, when $p=2$, the definitions of $\alpha$,
$\beta$ given in \eqref{30} and \eqref{31} agree.
\begin{rem}
The following are sufficient conditions that imply the above
weighted volume growth conditions for geodesic balls in $M$.
  \begin{itemize}
    \item[i)] Suppose that there exist $C_0>0$, $k>0$ such that
    \begin{equation}\label{e30}
    V(x) \leq C \pa{1+r(x)}^{C_0}
    \end{equation}
    and
    \begin{equation}\label{e31}
    \int_{B_R\setminus B_{R/2}} V^{-\beta}\,d\mu \leq C R^{\alpha}\pa{\log R}^k
    \end{equation}
    for every $R>0$  sufficiently large; then condition \eqref{EQ_hp1} holds.
    \item[ii)] Suppose that there exists $C_0>0$ such that
    \begin{equation}\label{e32}
    \frac{1}{C}\pa{1+r(x)}^{-C_0} \leq V(x) \leq C \pa{1+r(x)}^{C_0}
    \end{equation}
   and
  \begin{equation}\label{e33}
  \int_{B_R\setminus B_{R/2}} V^{-\beta}\,d\mu \leq C R^{\alpha}\pa{\log R}^\beta
   \end{equation}
  for every $R>0$  sufficiently large; then condition \eqref{EQ_hp2} holds.
    \item[iii)] Suppose there exist $C_0\geq0$, $k\geq 0$, $\theta>0$, $\tau>\max\{\frac{\sigma-p+1}{\sigma}\pa{k+1},1\}$  such that
    \begin{equation}\label{e34}
    V(x) \leq C \pa{r(x)}^{C_0}e^{-\theta \pa{\log r(x)}^\tau}
    \end{equation}
    and
    \begin{equation}\label{e35}
     \int_{B_R\setminus B_{R/2}} V^{-\beta}\,d\mu \leq C R^{\alpha}\pa{\log R}^k
    \end{equation}
    for every $R>0$ sufficiently large; then condition \eqref{EQ_hp3} holds.
  \end{itemize}
\end{rem}

We can now state our main theorem.

\begin{theorem}\label{thm_intro3}
Let $p>1$, $\sigma>p-1$, $V\in L^1_{\text{loc}}(M)$ with  $V>0$
a.e. on $M$ and  $a \in \operatorname{Lip}_{\text{loc}}(M)$ with
$a>0$ on $M$. If $u \in W^{1,p}_{\text{loc}}(M) \cap
L^\sigma_{\text{loc}}(M, V d\mu_0)$ is a nonnegative weak solution
of \eqref{EQ_gen}, then $u\equiv0$ on $M$ provided that one of the
conditions (HP1), (HP2) or (HP3) holds (see Definition
\ref{defi_volgrowth}).
\end{theorem}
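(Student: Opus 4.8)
The plan is to run a nonlinear--capacity argument in the spirit of Mitidieri--Pohozaev and Grygor'yan--Sun, adapted to the weighted $p$--Laplacian. Writing $d\mu=a\,d\mu_0$, the weak form of \eqref{EQ_gen} reads: for every $0\le\varphi\in W^{1,p}(M)$ with compact support, $\int_M\abs{\nabla u}^{p-2}\pair{\nabla u,\nabla\varphi}\,d\mu\ge\int_M Vu^\sigma\varphi\,d\mu$. I would test this with $\varphi=(u+t)^{-\gamma}\eta^\lambda$, where $t>0$, the exponent $\gamma\in(0,p-1)$ is small and to be optimized, $\lambda\ge\alpha$ is fixed, and $\eta\in\operatorname{Lip}(M)$ is a compactly supported cutoff with $0\le\eta\le1$. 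Since $(u+t)^{-\gamma}$ is bounded and $u\in W^{1,p}_{\mathrm{loc}}\cap L^\sigma_{\mathrm{loc}}(M,V\,d\mu_0)$, such $\varphi$ is admissible and $\int_M Vu^{\sigma-\gamma}\eta^\lambda\,d\mu$ is finite (by $V\in L^1_{\mathrm{loc}}$ together with $u\in L^\sigma_{\mathrm{loc}}(M,V\,d\mu_0)$).

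Expanding $\nabla\varphi$, the term $\gamma\int\abs{\nabla u}^p(u+t)^{-\gamma-1}\eta^\lambda\,d\mu$ occurs with a favourable sign (it joins $\int Vu^\sigma\varphi\,d\mu$), whereas $\lambda\int\abs{\nabla u}^{p-1}(u+t)^{-\gamma}\eta^{\lambda-1}\abs{\nabla\eta}\,d\mu$ is controlled by Young's inequality so as to reabsorb a small multiple of the former; what survives, after $t\downarrow0$ (monotone convergence on the left, dominated convergence on the right, using $0<p-1-\gamma<p$ and $u\in L^p_{\mathrm{loc}}$), is $\int_M Vu^{\sigma-\gamma}\eta^\lambda\,d\mu\le C\gamma^{1-p}\int_M u^{p-1-\gamma}\eta^{\lambda-p}\abs{\nabla\eta}^p\,d\mu$. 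Applying Hölder's inequality with the conjugate exponents $q=\tfrac{\sigma-\gamma}{p-1-\gamma}$ and $q'=\tfrac{\sigma-\gamma}{\sigma-p+1}$ --- writing $u^{p-1-\gamma}\eta^{\lambda-p}\abs{\nabla\eta}^p=(Vu^{\sigma-\gamma}\eta^\lambda)^{1/q}\cdot V^{-1/q}\eta^{\lambda-p-\lambda/q}\abs{\nabla\eta}^p$ and dividing out the finite factor $\int_M Vu^{\sigma-\gamma}\eta^\lambda\,d\mu$ --- one gets the capacity inequality
\begin{equation}\label{eq:cap}
\int_M Vu^{\sigma-\gamma}\eta^\lambda\,d\mu\ \le\ C\,\gamma^{(1-p)q'}\int_M V^{-\beta+\eps}\,\eta^{\lambda-\alpha+p\eps}\,\abs{\nabla\eta}^{\alpha-p\eps}\,d\mu,\qquad\eps:=\frac{\gamma}{\sigma-p+1}\in(0,\beta),
\end{equation}
with $\alpha,\beta$ as in \eqref{31}. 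Here $pq'=\alpha-p\eps$ and $q'/q=\beta-\eps$, which is exactly why the weight $V^{-\beta+\eps}$ of Definition \ref{defi_volgrowth} enters, and $\lambda\ge\alpha$ makes the exponent of $\eta$ nonnegative.

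Now fix $o\in M$ and a large $R_0>0$, and in \eqref{eq:cap} take $\eta\equiv1$ on $B_{R_0}$, telescoping to $0$ over the dyadic annuli $A_j=B_{2^{j+1}R_0}\setminus B_{2^jR_0}$, losing an amount $\delta_j\ge0$ on $A_j$ ($\sum_j\delta_j=1$, $\abs{\nabla\eta}\lesssim\delta_j/(2^jR_0)$ there). Under \emph{(HP1)} the right-hand side of \eqref{eq:cap} is $\le C\sum_j\delta_j^{\,s}(2^jR_0)^{(p+C_0)\eps}(\log(2^jR_0))^k$, with $s=\alpha-p\eps>1$; minimizing this convex expression over $\{\delta_j\}$ subject to $\sum_j\delta_j=1$ (Lagrange multipliers) gives a bound of the form $\big(\sum_j b_j^{-1/(s-1)}\big)^{1-s}$, and, since $k<\beta<\alpha-1$, letting the number of annuli tend to $\infty$ the series converges and one obtains $\int_{B_{R_0}}Vu^{\sigma-\gamma}\,d\mu\le C\,R_0^{(p+C_0)\eps}\,\eps^{\,\beta-k-\eps}$. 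Because $k<\beta$, the exponent $\beta-k-\eps$ is positive for small $\eps$; thus, keeping $R_0$ fixed and letting $\gamma\downarrow0$ (hence $\eps\downarrow0$) sends the right-hand side to $0$, while $u^{\sigma-\gamma}\to u^\sigma$ pointwise, so Fatou's lemma yields $\int_{B_{R_0}}Vu^\sigma\,d\mu=0$, hence $u\equiv0$ on $B_{R_0}$ (as $V>0$ a.e.), and finally $u\equiv0$ on $M$.

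In the critical case \emph{(HP2)} one has $k=\beta$, and the same computation gives only $\int_{B_{R_0}}Vu^\sigma\,d\mu\le C$ uniformly in $R_0$, i.e. $u\in L^\sigma(M,V\,d\mu)$; one then iterates, re-applying \eqref{eq:cap} but this time estimating its right-hand side by Hölder against the now globally integrable $Vu^\sigma$ (whose mass over far-out annuli vanishes), which is where the second bound of \eqref{EQ_hp2}, on $V^{-\beta-\eps}$, is used to close the recursion and promote boundedness to vanishing. In the rapidly decaying case \emph{(HP3)} the hypothesis supplies an extra factor $e^{-\eps\theta(\log R)^\tau}$ on the right of \eqref{eq:cap}; since $\tau>1$ this factor dominates every power of $R$ that appears, and the sharper requirement $\tau>\tfrac{\sigma-p+1}{\sigma}(k+1)$ is exactly what is needed so that it also absorbs the $(\log R)^k$ and $\gamma^{1-p}$ factors, whence a suitable choice $\eps=\eps(R)\downarrow0$ again forces the right-hand side to $0$. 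I expect the main obstacle to be precisely this bookkeeping: choosing in concert the test-function exponent $\gamma$ (hence $\eps$), the number and decay profile $\{\delta_j\}$ of the telescoping cutoff, and --- for \emph{(HP2)} --- the iteration scheme, so that the competing logarithmic (and, in \emph{(HP3)}, exponential) factors combine into a decaying quantity; it is this balancing that makes the thresholds $k<\beta$, $k=\beta$, and $\tau>\tfrac{\sigma-p+1}{\sigma}(k+1)$ tight.
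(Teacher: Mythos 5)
Your derivation of the capacity inequality is, up to notation ($\gamma\leftrightarrow t$, $\eta^\lambda\leftrightarrow\varphi^s$), exactly the paper's Lemma \ref{LE_tech1}, and your treatment of (HP1) is sound: the optimized dyadic--telescoping cutoff (Lagrange multipliers over the $\delta_j$) is a legitimate substitute for the paper's choice of the slowly decaying tail $\varphi=(r/R)^{-C_1t}$ times a linear cutoff, and both yield the bound $\int_{B_{R_0}}Vu^{\sigma-\gamma}\,d\mu\lesssim R_0^{(p+C_0)\eps}\,\eps^{\beta-k+O(\eps)}$, which vanishes as $\gamma\downarrow 0$ when $k<\beta$. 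The (HP3) case, though stated loosely, rests on the same mechanism as the paper's (the exponential factor upgrades the $\eps$-power from $\beta-k$ to $\tfrac{\sigma}{\sigma-p+1}-\tfrac{k+1}{\tau}$); in fact in your telescoping framework the series $\sum_j b_j^{-1/(s-1)}$ even diverges for fixed $\eps>0$, so this case closes easily.

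The genuine gap is in the critical case (HP2). Your capacity inequality has $\int Vu^{\sigma-\gamma}\eta^\lambda\,d\mu$ on the left and \emph{no} $u$ on the right, so ``estimating its right-hand side by H\"older against $Vu^\sigma$'' is not a well-defined step; and if you instead go back one step and H\"older $\int u^{p-1-\gamma}\eta^{\lambda-p}\abs{\nabla\eta}^p\,d\mu$ against $Vu^{\sigma-\gamma}$ on $\supp\nabla\eta$, the conjugate exponent of $u^{p-1-\gamma}$ produces the weight $V^{-(\beta-\eps)}$ again --- the second inequality of \eqref{EQ_hp2}, on $V^{-\beta-\eps}$, never enters through your route, and the resulting factor $\bigl(\int_{\supp\nabla\eta}V^{-\beta+\eps}\abs{\nabla\eta}^{\alpha-p\eps}\bigr)^{1/q'}$ blows up polynomially as the support recedes, so the recursion does not close. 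What is needed is a second, structurally different test-function computation, namely the paper's Lemma \ref{LE_tech2}: test with $\eta^\lambda$ \emph{alone} so that $\int Vu^\sigma\eta^\lambda\,d\mu$ (not $u^{\sigma-\gamma}$) sits on the left; split $\abs{\nabla u}^{p-1}\abs{\nabla\eta}$ by H\"older using the retained gradient term $\gamma\int\eta^\lambda u^{-\gamma-1}\abs{\nabla u}^p\,d\mu$ (this is precisely where the ``favourable sign'' term you mention, and then drop, must be used); and then H\"older $\int u^{(p-1)(1+\gamma)}\eta^{\lambda-p}\abs{\nabla\eta}^p\,d\mu$ against $Vu^\sigma$ localized to $\supp\nabla\eta$. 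Because $(1+\gamma)(p-1)>p-1$, this produces the weight $V^{-\beta-\Lambda}$ with $\Lambda>0$ --- which is where the second bound in \eqref{EQ_hp2} is genuinely used --- together with the localized factor $\bigl(\int_{M\setminus K}Vu^\sigma\,d\mu\bigr)^{(1+\gamma)(p-1)/(p\sigma)}$ that tends to $0$ once the first step has shown $u\in L^\sigma(M,V\,d\mu)$ (that first step of yours is fine). Without this second lemma the (HP2) case is not proved.
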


In the particular case $p=2$, from Theorem \ref{thm_intro3} we can
also derive nonexistence criteria for nonnegative weak solutions
of the semilinear inequality
\begin{equation}\label{EQ_Rig1intro}
    \frac{1}{a(x)}\diver\pa{a(x)\nabla u}+b(x)u+V(x)u^\sigma \leq 0 \quad \text{on }\,  M.
\end{equation}
We refer the reader to Section \ref{RigSec} for a precise
description of the results concerning inequality
\eqref{EQ_Rig1intro}.

The rest of the paper is organized as follows. In Section
\ref{sec2} we state and prove some preliminary technical results,
that we put to use in Section \ref{sec3}, where we give the proof
of Theorem \ref{thm_intro3}. In Section \ref{RigSec} we describe
in more detail nonexistence results for nontrivial nonnegative
weak solutions of \eqref{EQ_Rig1intro}. Finally in Section
\ref{seclast} we collect some counterexamples to Theorem
\ref{thm_intro3} for the case $p=2$, showing that the weighted
volume growth conditions that we assume on geodesic balls are in
many cases sharp.

\vspace{0,3cm}

{\bf Acknowledgements.} The authors wish to thank Prof. Marco
Rigoli for interesting discussions, and in particular, for
suggesting Section \ref{RigSec}.

%
%Laplaciano:
%
%caso di Rn, con bibliografia: Gidas-Spruck, Pohozaev-Mitidieri
%
%Caso su varietà, con bibliografia: Grygorian 1+2. Altro?
%
%
%
%
%
%Elenco nostri risultati: potenziale e sua rilevanza
%
%
%
%Caso generico, con elenco risultati
%
%
%struttura articolo

\section{Preliminary results}\label{sec2}

For any relatively compact domain $\Omega\subset M$ and $p>1$,
$W^{1, p}(\Omega)$ is  the completion of the space of Lipschitz
functions $w: \Omega \ra \erre$ with respect to the norm
\[
\|w\|_{W^{1,p}(\Omega)}= \pa{\int_\Omega \abs{\nabla
w}^p\,d\mu_0+\int_\Omega \abs{w}^p\,d\mu_0}^{\frac 1p}.
\]

For any function $u:M\ra\erre$ we say that $u \in
W^{1,p}_{\text{loc}}(M)$ if for every relatively compact domain
$\Omega\subset\subset M$ one has $ u_{|_\Omega} \in
W^{1,p}(\Omega)$.

\begin{defi}
Let $p>1$, $\sigma>p-1$, $V>0$ a.e. on $M$ and $V\in
L^1_{\text{loc}}(M)$. We say that $u$ is a \emph{weak solution } of
equation \eqref{EQ_gen} if $u \in W^{1,p}_{\text{loc}}(M) \cap
L^\sigma_{\text{loc}}(M, V d\mu_0)$ and for every $\varphi \in
W^{1,p}(M) \cap L^\infty(M)$, with $\varphi \geq 0$ a.e. on $M$ and
compact support, one has
 \begin{equation}\label{19}
    -\int_M a(x)\abs{\nabla u}^{p-2}\pair{\nabla u, \nabla\pa{\frac{\varphi}{a(x)}}}\,d\mu_0 + \int_M V(x)u^\sigma \varphi\,d\mu_0 \leq 0 \qquad \text{on }\, M.
  \end{equation}
\end{defi}

\begin{rem}\label{rem1}
  We note that, by \eqref{EQ_propr_a}, $u \in
W^{1,p}_{\text{loc}}(M) \cap L^\sigma_{\text{loc}}(M, V d\mu)$ is a
weak solution of \eqref{EQ_gen} if and only if it  is a weak
solution of
\[
  \diver\pa{a(x)\abs{\nabla u}^{p-2}\nabla u} + a(x) V(x)u^\sigma \leq 0 \quad \text{on }\,
  M,
\]
i.e. if and only if for every $\psi \in W^{1,p}(M) \cap
L^\infty(M)$, with $\psi \geq 0$ a.e. on $M$ and compact support,
one has
\begin{equation}\label{EQ_weakSolGen}
    -\int_M \abs{\nabla u}^{p-2}\pair{\nabla u, \nabla \psi}\,d\mu + \int_M V(x)u^\sigma \psi\,d\mu \leq 0 \qquad \text{on }\, M,
  \end{equation}
where $d\mu$ is the measure on $M$ with density $a$, as defined in
\eqref{meas_a}.

Indeed, given any nonnegative $\psi \in W^{1,p}(M) \cap L^\infty(M)$
with compact support, one can choose $\varphi=a\psi$ as a test
function in \eqref{19} in order to obtain \eqref{EQ_weakSolGen}.
Similarly, , given any nonnegative $\varphi \in W^{1,p}(M) \cap
L^\infty(M)$ with compact support, one can insert
$\psi=\frac{\varphi}{a}$ in \eqref{EQ_weakSolGen} and find
\eqref{19}.
\end{rem}

The following two lemmas will be crucial ingredients in the proof
the Theorem \ref{thm_intro3}.

\begin{lemma}\label{LE_tech1}
Let $s \geq \frac{p\sigma}{\sigma-p+1}$ be fixed. Then there exists
a constant $C>0$ such that for every $t\in\pa{0, \min\set{1, p-1}}$,
every nonnegative weak solution $u$ of equation \eqref{EQ_gen} and
every function $\varphi \in \operatorname{Lip}(M)$ with compact
support and $0 \leq \varphi \leq 1$  one has
  \begin{equation}\label{EQ_2.6Lemma_tech1}
    \frac{t}{p}\int_M{\varphi^su^{-t-1}\abs{\nabla u}^p\chi_\Omega}\,d\mu + \frac{1}{p}\int_M{V u^{\sigma-t}\varphi^s}\,d\mu \leq C t^{-\frac{(p-1)\sigma}{\sigma-p+1}}\int_M{V^{-\frac{p-t-1}{\sigma-p+1}}\abs{\nabla\varphi}^{\frac{p(\sigma-t)}{\sigma-p+1}}}\, d\mu,
  \end{equation}
  where $\Omega = \set{x\in M : u(x)>0}$, $\chi_\Omega$ is the characteristic function of
  $\Omega$ and $d\mu$ is the measure on $M$ with density $a$, as defined in
\eqref{meas_a}.
\end{lemma}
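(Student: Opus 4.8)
The plan is to test the weak formulation \eqref{EQ_weakSolGen} of \eqref{EQ_gen} (available by Remark \ref{rem1}) against a regularization of the formal choice $\psi=\varphi^{s}u^{-t}$, and then to absorb the resulting gradient term into the left-hand side by two successive applications of Young's inequality with suitably tuned parameters. For $\delta\in(0,1)$ set $u_\delta=u+\delta$. Since $z\mapsto z^{-t}$ is Lipschitz on $[\delta,+\infty)$ and $z\mapsto z^{s}$ is Lipschitz on $[0,1]$, while $u\in W^{1,p}_{\text{loc}}(M)$ and $\varphi\in\operatorname{Lip}(M)$ has compact support, the function $\psi_\delta=\varphi^{s}u_\delta^{-t}$ belongs to $W^{1,p}(M)\cap L^{\infty}(M)$, is nonnegative and compactly supported, hence admissible in \eqref{EQ_weakSolGen}. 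Inserting it, using $\nabla\psi_\delta=s\varphi^{s-1}u_\delta^{-t}\nabla\varphi-t\varphi^{s}u_\delta^{-t-1}\nabla u$, $\nabla u=\nabla u_\delta$, the Cauchy--Schwarz inequality on the cross term, and $\abs{\nabla u}=0$ a.e. on $M\setminus\Omega$, one obtains
\begin{equation*}
 t\int_M \varphi^{s}u_\delta^{-t-1}\abs{\nabla u}^{p}\chi_\Omega\,d\mu+\int_M V u^{\sigma}u_\delta^{-t}\varphi^{s}\,d\mu\ \le\ s\int_M \abs{\nabla u}^{p-1}\varphi^{s-1}u_\delta^{-t}\abs{\nabla\varphi}\,d\mu,
\end{equation*}
all integrals being finite for fixed $\delta>0$ since $u_\delta^{-t}\le\delta^{-t}$ and $u\in W^{1,p}_{\text{loc}}(M)\cap L^{\sigma}_{\text{loc}}(M,V\,d\mu_0)$.

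For the first Young inequality I would write the integrand on the right as $A\cdot B$ with $A=\abs{\nabla u}^{p-1}\varphi^{s(p-1)/p}u_\delta^{-(t+1)(p-1)/p}$, so that $A^{p/(p-1)}=\abs{\nabla u}^{p}\varphi^{s}u_\delta^{-t-1}$ and $B^{p}=\varphi^{s-p}u_\delta^{p-1-t}\abs{\nabla\varphi}^{p}$; here the power of $\varphi$ in $B$ is nonnegative because $s\ge\alpha>p$, and $p-1-t>0$ because $t<p-1$. Young's inequality with conjugate exponents $\tfrac{p}{p-1},p$ and a parameter $\eta$ tuned so that $s\tfrac{p-1}{p}\eta^{p/(p-1)}=t\tfrac{p-1}{p}$ then absorbs a fraction of the first term on the left and leaves
\begin{equation*}
 \frac{t}{p}\int_M \varphi^{s}u_\delta^{-t-1}\abs{\nabla u}^{p}\chi_\Omega\,d\mu+\int_M V u^{\sigma}u_\delta^{-t}\varphi^{s}\,d\mu\ \le\ \frac{s^{p}}{p}\,t^{-(p-1)}\int_M \varphi^{s-p}u_\delta^{p-1-t}\abs{\nabla\varphi}^{p}\,d\mu.
\end{equation*}

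To remove the factor $u_\delta^{p-1-t}$ I would set $\lambda=\tfrac{p-1-t}{\sigma-t}\in(0,1)$, so that $\tfrac{p-1-t}{\lambda}=\sigma-t$, $\tfrac{\lambda}{1-\lambda}=\tfrac{p-t-1}{\sigma-p+1}$ and $\tfrac{p}{1-\lambda}=\tfrac{p(\sigma-t)}{\sigma-p+1}$, factor $\varphi^{s-p}u_\delta^{p-1-t}\abs{\nabla\varphi}^{p}=\pa{V^{\lambda}\varphi^{s\lambda}u_\delta^{p-1-t}}\pa{V^{-\lambda}\varphi^{s-p-s\lambda}\abs{\nabla\varphi}^{p}}$, and apply Young with exponents $\tfrac1\lambda,\tfrac1{1-\lambda}$ and a parameter $\eta_2$ such that $\tfrac{s^{p}\lambda}{p}t^{-(p-1)}\eta_2^{1/\lambda}=\tfrac{p-1}{p}$. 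Since $\pa{V^{\lambda}\varphi^{s\lambda}u_\delta^{p-1-t}}^{1/\lambda}=V\varphi^{s}u_\delta^{\sigma-t}$, and the power of $\varphi$ in $\pa{V^{-\lambda}\varphi^{s-p-s\lambda}\abs{\nabla\varphi}^{p}}^{1/(1-\lambda)}$ equals $s-\tfrac{p(\sigma-t)}{\sigma-p+1}\ge0$ (again because $s\ge\alpha$), writing $u_\delta^{\sigma-t}=u^{\sigma}u_\delta^{-t}+\pa{u_\delta^{\sigma-t}-u^{\sigma}u_\delta^{-t}}$ and absorbing the first summand on the left yields
\begin{equation*}
 \frac{t}{p}\int_M \varphi^{s}u_\delta^{-t-1}\abs{\nabla u}^{p}\chi_\Omega\,d\mu+\frac1p\int_M V u^{\sigma}u_\delta^{-t}\varphi^{s}\,d\mu\ \le\ \frac{p-1}{p}I_\delta+C\,t^{-\frac{(p-1)(\sigma-t)}{\sigma-p+1}}\int_M V^{-\frac{p-t-1}{\sigma-p+1}}\varphi^{s-\frac{p(\sigma-t)}{\sigma-p+1}}\abs{\nabla\varphi}^{\frac{p(\sigma-t)}{\sigma-p+1}}\,d\mu,
\end{equation*}
with $C=C(p,s)$ and $I_\delta=\int_M V\varphi^{s}\pa{u_\delta^{\sigma-t}-u^{\sigma}u_\delta^{-t}}\,d\mu\ge0$.

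Finally I would let $\delta\to0^{+}$: on the left $u_\delta^{-t-1}\uparrow u^{-t-1}$ and $u^{\sigma}u_\delta^{-t}\uparrow u^{\sigma-t}$ on $\Omega$ while both integrands vanish on $M\setminus\Omega$, so monotone convergence applies, and on the right $u_\delta^{\sigma-t}-u^{\sigma}u_\delta^{-t}=u_\delta^{-t}\pa{u_\delta^{\sigma}-u^{\sigma}}\to0$ pointwise and is dominated on $\supp\varphi$ by a constant multiple of $u^{\sigma}+1\in L^{1}(M,V\,d\mu)$, whence $I_\delta\to0$ by dominated convergence; using $0\le\varphi\le1$ to discard the (nonnegative) power of $\varphi$ and $0<t<1$ together with $\sigma-t<\sigma$ to bound $t^{-\frac{(p-1)(\sigma-t)}{\sigma-p+1}}\le t^{-\frac{(p-1)\sigma}{\sigma-p+1}}$ then gives exactly \eqref{EQ_2.6Lemma_tech1}. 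I expect the main obstacle to be the uniformity of the constant $C$ over all $t\in\pa{0,\min\set{1,p-1}}$: the choice of $\eta_2$ produces a factor $t^{-(p-1)\lambda/(1-\lambda)}$ which, multiplied by the $t^{-(p-1)}$ already present, gives precisely the exponent $-\tfrac{p-1}{1-\lambda}=-\tfrac{(p-1)(\sigma-t)}{\sigma-p+1}$, and one must check that the remaining $t$-dependent factors — entering through $\lambda$, $1-\lambda$ and $\eta_2^{-1/(1-\lambda)}\sim\pa{\lambda^{-1}t^{p-1}}^{-\lambda/(1-\lambda)}$ — stay bounded, the delicate regime being $t\to(p-1)^{-}$, where $\lambda\to0$ but $\lambda^{\lambda/(1-\lambda)}\to1$. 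Keeping track of the nonnegativity of every power of $\varphi$ that appears (which is exactly where $s\ge\tfrac{p\sigma}{\sigma-p+1}$ is needed), and of the admissibility and integrability of the $\psi_\delta$, are the remaining, routine verifications.
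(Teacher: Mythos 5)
Your proof is correct and follows essentially the same route as the paper's: the same regularized test function $\varphi^s u_\delta^{-t}$, the same two Young inequalities (first with exponents $\tfrac{p}{p-1},p$, then with $\tfrac{\sigma-t}{p-1-t},\tfrac{\sigma-t}{\sigma-p+1}$), and the same absorption yielding the factor $t^{-\frac{(p-1)(\sigma-t)}{\sigma-p+1}}\leq t^{-\frac{(p-1)\sigma}{\sigma-p+1}}$. The only (cosmetic) difference is in the passage $\delta\to 0^+$, where you split off the remainder $I_\delta$ and use monotone plus dominated convergence, while the paper applies Fatou's lemma to the combined integrand.
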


\begin{proof}
Let $\eta>0$ and $u_\eta = u+\eta$, then $u_\eta \in
W^{1,p}_{\text{loc}}(M) \cap L^\sigma_{\text{loc}}(M, V d\mu)$.
Define $\psi = \varphi^su^{-t}_\eta$, then $\psi$ is an admissible
test function for equation \eqref{EQ_weakSolGen}, with
  \begin{equation*}
    \nabla \psi = s\varphi^{s-1}u^{-t}_\eta\nabla\varphi - t\varphi^su^{-t-1}_\eta\nabla u \qquad a.e. \text{ on }\, M.
  \end{equation*}
Indeed, $\operatorname{supp}\psi =\operatorname{supp}\varphi$,
$0\leq \psi \leq \eta^{-t}$ so that $\psi\in L^\infty(M)$ and $\psi
\in W^{1,p}(M)$ with
  \begin{align*}
    \int_M\abs{\nabla \psi}^p\,d\mu &\leq 2^{p-1}\sq{s^p\int_M \varphi^{\pa{s-1}p}u^{-pt}_{\eta}\abs{\nabla \varphi}^p \,d\mu+t^p\int_M{\varphi^{sp}u^{-\pa{t+1}p}_{\eta}\abs{\nabla u}^p} \,d\mu} \\ &\leq 2^{p-1}\sq{s^p\eta^{-pt}\int_M{\abs{\nabla \varphi}^p} \,d\mu+t^p\eta^{-\pa{t+1}p}\int_{\operatorname{supp}\varphi}\abs{\nabla u}^p\,d\mu} < +\infty.
  \end{align*}
Equation \eqref{EQ_weakSolGen} then gives
  \begin{equation}\label{EQ_gianni1}
    t\int_M{\varphi^su^{-t-1}_{\eta}\abs{\nabla u}^p} \,d\mu+\int_M {V u^\sigma u^{-t}_{\eta}\varphi^s}\,d\mu \leq s\int_M {\varphi^{s-1}u^{-t}_{\eta}\abs{\nabla u}^{p-2}\pair{\nabla u, \nabla\varphi}}\,d\mu.
  \end{equation}
Now we estimate the right-hand side of \eqref{EQ_gianni1} using
Young's inequality,  obtaining
  \begin{align*}
    s\int_M {\varphi^{s-1}u^{-t}_{\eta}\abs{\nabla u}^{p-2}\pair{\nabla u, \nabla\varphi}}\,d\mu &\leq s\int_M {\varphi^{s-1}u^{-t}_{\eta}\abs{\nabla u}^{p-1} \abs{\nabla\varphi}}\,d\mu \\&\leq \int_M \pa{t^{\frac{p-1}{p}}\varphi^{s\frac{p-1}{p}}u_\eta^{-\pa{t+1}\frac{p-1}{p}}\abs{\nabla u}^{p-1} }\pa{s t^{-\frac{p-1}{p}}\varphi^{\frac{s}{p}-1}u_\eta^{1-\frac{t+1}{p}}\abs{\nabla \varphi}}\,d\mu \\ &\leq\frac{p-1}{p}\int_M{t\varphi^su^{-t-1}_{\eta}\abs{\nabla u}^p}\,d\mu+\frac{1}{p}\int_M{s^pt^{-(p-1)}\varphi^{s-p}u^{p-(t+1)}_\eta\abs{\nabla\varphi}^p}\,d\mu.
  \end{align*}
From \eqref{EQ_gianni1} we have
    \begin{equation}\label{EQ_gianni2}
    \frac{t}{p}\int_M{\varphi^su^{-t-1}_{\eta}\abs{\nabla u}^p} \,d\mu+\int_M {V u^\sigma u^{-t}_{\eta}\varphi^s}\,d\mu \leq \frac{1}{p}\int_M{s^pt^{-(p-1)}\varphi^{s-p}u^{p-(t+1)}_\eta\abs{\nabla\varphi}^p}\,d\mu.
  \end{equation}
We exploit again Young's inequality on the right-hand side of
\eqref{EQ_gianni2}, with
  \[
  q = \frac{\sigma-t}{p-t-1}, \qquad q'=\frac{q}{q-1} = \frac{\sigma-t}{\sigma-p+1}, \qquad \delta=\frac{p-1}{p}
  \]
obtaining
   \begin{align*}
    \frac{1}{p}\int_M{s^pt^{-(p-1)}\varphi^{s-p}u^{p-(t+1)}_\eta\abs{\nabla\varphi}^p}\,d\mu &= \int_M\pa{\delta^{\frac1q}u_\eta^{p-(t+1)}V^{\frac1q}\varphi^{\frac sq} }\pa{\delta^{-\frac1q}\frac{s^p}{pt^{p-1}} \varphi^{\frac{s}{q'}-p}V^{-\frac1q}\abs{\nabla \varphi}^p}\,d\mu \\ &\leq \frac{\delta}{q}\int_M{u_\eta^{\sigma-t}V \varphi^s }\,d\mu + \frac{1}{q'p^{q'}}\delta^{-\frac{q'}{q}}\pa{\frac{s^p}{t^{p-1}}}^{q'}\int_M {\varphi^{s-pq'} V^{-\frac{q'}{q}}\abs{\nabla \varphi}^{pq'}   }\,d\mu \\&\leq \delta\int_M{u_\eta^{\sigma-t}V \varphi^s }\,d\mu + \delta^{-\frac{p-1}{\sigma-p+1}}\pa{\frac{s^p}{t^{p-1}}}^{\frac{\sigma}{\sigma-p+1}}\int_M { V^{-\frac{q'}{q}}\abs{\nabla \varphi}^{pq'}   }\,d\mu \\&=\frac{p-1}{p}\int_M{u_\eta^{\sigma-t}V \varphi^s }\,d\mu+C t^{-\frac{(p-1)\sigma}{\sigma-p+1}}\int_M { V^{-\frac{p-t-1}{\sigma-p+1}}\abs{\nabla \varphi}^{\frac{p(\sigma-t)}{\sigma-p+1}}   }\,d\mu.
  \end{align*}
Substituting in \eqref{EQ_gianni2} we have
  \begin{align}
    \label{EQ_gianni3}I&=\frac{t}{p}\int_M{\varphi^su^{-t-1}_{\eta}\abs{\nabla u}^p} \,d\mu + \int_M {V u^\sigma u^{-t}_{\eta}\varphi^s}\,d\mu-\frac{p-1}{p}\int_M{V u_\eta^{\sigma-t} \varphi^s }\,d\mu\\
    \nonumber& \leq C t^{-\frac{(p-1)\sigma}{\sigma-p+1}}\int_M { V^{-\frac{p-t-1}{\sigma-p+1}}\abs{\nabla \varphi}^{\frac{p(\sigma-t)}{\sigma-p+1}}   }\,d\mu.
  \end{align}
Since $\nabla u=0$ a.e. on the set $M\setminus \Omega$, see
\cite[Lemma 7.7]{GilTru}, we have
  \begin{align*}
   I= \int_M\sq{\frac{t}{p}\varphi^su^{-t-1}_{\eta}\abs{\nabla u}^p  + V u^\sigma u^{-t}_{\eta}\varphi^s-\frac{p-1}{p}V u_\eta^{\sigma-t} \varphi^s }\chi_\Omega\,d\mu -\int_{M\setminus\Omega}\frac{p-1}{p}V \eta^{\sigma-t} \varphi^s\,d\mu.
  \end{align*}
Now note that $\sq{\frac{t}{p}\varphi^su^{-t-1}_{\eta}\abs{\nabla
u}^p  + V u^\sigma u^{-t}_{\eta}\varphi^s-\frac{p-1}{p}V
u_\eta^{\sigma-t} \varphi^s }\chi_\Omega$ converges a.e. in $M$ to
the function
  \[
  \sq{\frac{t}{p}\varphi^su^{-t-1}\abs{\nabla u}^p  + \frac{1}{p}V u^{\sigma-t} \varphi^s }\chi_\Omega
  \]
as $\eta \ra 0^+$. By an application of Fatou's lemma and using
\eqref{EQ_gianni3} we obtain
\begin{align*}
    \int_M{\frac{t}{p}\varphi^su^{-t-1}\abs{\nabla u}^p\chi_\Omega}\,d\mu &+ \int_M{\frac{1}{p}V u^{\sigma-t} \varphi^s}\,d\mu \\ &=\int_M \sq{\frac{t}{p}\varphi^su^{-t-1}\abs{\nabla u}^p  + \frac{1}{p}V u^{\sigma-t} \varphi^s }\chi_\Omega\,d\mu \\ &\leq \liminf_{\eta\ra0^+} \int_M\sq{\frac{t}{p}\varphi^su^{-t-1}_{\eta}\abs{\nabla u}^p  + V u^\sigma u^{-t}_{\eta}\varphi^s-\frac{p-1}{p}V u_\eta^{\sigma-t} \varphi^s }\chi_\Omega\,d\mu \\&\leq C t^{-\frac{(p-1)\sigma}{\sigma-p+1}}\int_M { V^{-\frac{p-t-1}{\sigma-p+1}}\abs{\nabla \varphi}^{\frac{p(\sigma-t)}{\sigma-p+1}}   }\,d\mu+\liminf_{\eta\ra 0^+}\int_{M\setminus\Omega}\frac{p-1}{p}V \eta^{\sigma-t} \varphi^s\,d\mu \\&=C t^{-\frac{(p-1)\sigma}{\sigma-p+1}}\int_M { V^{-\frac{p-t-1}{\sigma-p+1}}\abs{\nabla \varphi}^{\frac{p(\sigma-t)}{\sigma-p+1}}   }\,d\mu,
\end{align*}
that is inequality \eqref{EQ_2.6Lemma_tech1}.
\end{proof}

\begin{lemma}\label{LE_tech2}
Let $s\geq\frac{2p\sigma}{\sigma-p+1}$ be fixed. Then there exists a
constant $C>0$ such that for every nonnegative weak solution $u$ of
equation \eqref{EQ_gen}, every function $\varphi \in
\operatorname{Lip}(M)$ with compact support and $0 \leq \varphi \leq
1$ and every $t\in(0,\min\{1,p-1,\frac{\sigma-p+1}{2(p-1)}\})$ one
has
%%% preferiamo chiedere magari  $0<t<\min\set{1, \frac{\sigma-p+1}{p-1},p-1}$ e $s \geq\frac{p\sigma}{\sigma-(t+1)(p-1)}$ invece? però così s dipende da t...
\begin{align}
  \label{2.10}\int_M\varphi^su^\sigma V\,d\mu\leq C t^{-\frac{p-1}{p}-\frac{(p-1)^2\sigma}{p(\sigma-p+1)}}&
     \pa{\int_{M\setminus K} V^{-\frac{(t+1)(p-1)}{\sigma-(t+1)(p-1)}}
     \abs{\nabla\varphi}^\frac{p\sigma}{\sigma-(t+1)(p-1)}\,d\mu}^\frac{\sigma-(t+1)(p-1)}{p\sigma}\\
  \nonumber&\pa{\int_M{V^{-\frac{p-t-1}{\sigma-p+1}}\abs{\nabla\varphi}^{\frac{p(\sigma-t)}{\sigma-p+1}}}\,d\mu}^\frac{p-1}{p}
     \pa{\int_{M\setminus K}\varphi^su^\sigma V\,d\mu}^\frac{(t+1)(p-1)}{p\sigma},
\end{align}
with $K=\{x\in M : \varphi(x)=1\}$ and $d\mu$ is the measure on $M$
with density $a$, as defined in \eqref{meas_a}.
\end{lemma}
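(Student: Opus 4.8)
The plan is to apply the weak formulation \eqref{EQ_weakSolGen} once more, this time with the test function $\psi=\varphi^s$, and then to reduce the resulting gradient integral to the quantity already controlled by Lemma \ref{LE_tech1}. Since $s\ge1$ and $\varphi$ is Lipschitz, compactly supported and satisfies $0\le\varphi\le1$, the function $\varphi^s$ is an admissible test function, and using $\nabla\pa{\varphi^s}=s\varphi^{s-1}\nabla\varphi$ together with the Cauchy--Schwarz inequality one gets
\[
\int_M Vu^\sigma\varphi^s\,d\mu\le s\int_M\varphi^{s-1}\abs{\nabla u}^{p-1}\abs{\nabla\varphi}\,d\mu.
\]
Recalling that $\nabla u=0$ a.e.\ on $M\setminus\Omega$, with $\Omega=\{u>0\}$, and $\nabla\varphi=0$ a.e.\ on $K=\{\varphi=1\}$ (both by \cite[Lemma 7.7]{GilTru}), the integral on the right is really an integral over $\Omega\setminus K$, where $u>0$ and all the negative powers of $u$ appearing below are well defined.

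On $\Omega\setminus K$ I would split the integrand as
\[
\varphi^{s-1}\abs{\nabla u}^{p-1}\abs{\nabla\varphi}=\pa{\varphi^{a_1}\pa{u^{-(t+1)}\abs{\nabla u}^p}^{\frac{p-1}{p}}}\pa{\varphi^{a_2}\pa{u^\sigma V}^{\frac{(t+1)(p-1)}{p\sigma}}}\pa{\varphi^{a_3}V^{-\frac{(t+1)(p-1)}{p\sigma}}\abs{\nabla\varphi}},
\]
with $a_1+a_2+a_3=s-1$, and then apply H\"older's inequality with the conjugate exponents $p_1=\frac{p}{p-1}$, $p_2=\frac{p\sigma}{(t+1)(p-1)}$ and $p_3=\frac{p\sigma}{\sigma-(t+1)(p-1)}$; note that $p_3>0$ because $t<\frac{\sigma-p+1}{2(p-1)}<\frac{\sigma-p+1}{p-1}$. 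Choosing $a_1=\frac{s(p-1)}{p}$ and $a_2=\frac{s(t+1)(p-1)}{p\sigma}$ makes $a_1p_1=a_2p_2=s$ and forces $a_3=\frac{s[\sigma-(t+1)(p-1)]}{p\sigma}-1$, which is nonnegative precisely because $s\ge\frac{2p\sigma}{\sigma-p+1}$ and $t<\frac{\sigma-p+1}{2(p-1)}$; this is exactly where the doubled threshold on $s$ (compared with Lemma \ref{LE_tech1}) and the factor $1/2$ in the bound on $t$ come in. After trivially enlarging the integration domains, H\"older's inequality then bounds the right-hand side above by the product of $\pa{\int_M\varphi^su^{-t-1}\abs{\nabla u}^p\chi_\Omega\,d\mu}^{\frac{p-1}{p}}$, $\pa{\int_{M\setminus K}\varphi^su^\sigma V\,d\mu}^{\frac{(t+1)(p-1)}{p\sigma}}$ and $\pa{\int_{M\setminus K}V^{-\frac{(t+1)(p-1)}{\sigma-(t+1)(p-1)}}\abs{\nabla\varphi}^{\frac{p\sigma}{\sigma-(t+1)(p-1)}}\,d\mu}^{\frac{\sigma-(t+1)(p-1)}{p\sigma}}$.

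It remains to estimate the first factor. Applying Lemma \ref{LE_tech1} with its parameter equal to $s$ (admissible since $s\ge\frac{2p\sigma}{\sigma-p+1}\ge\frac{p\sigma}{\sigma-p+1}$ and $t\in(0,\min\{1,p-1\})$) and discarding the nonnegative potential term on its left-hand side gives
\[
\int_M\varphi^su^{-t-1}\abs{\nabla u}^p\chi_\Omega\,d\mu\le C\,t^{-1-\frac{(p-1)\sigma}{\sigma-p+1}}\int_M V^{-\frac{p-t-1}{\sigma-p+1}}\abs{\nabla\varphi}^{\frac{p(\sigma-t)}{\sigma-p+1}}\,d\mu,
\]
so raising to the power $\frac{p-1}{p}$ produces exactly the factor $\pa{\int_M V^{-\frac{p-t-1}{\sigma-p+1}}\abs{\nabla\varphi}^{\frac{p(\sigma-t)}{\sigma-p+1}}\,d\mu}^{\frac{p-1}{p}}$ together with a power of $t$ equal to $-\frac{p-1}{p}\pa{1+\frac{(p-1)\sigma}{\sigma-p+1}}=-\frac{p-1}{p}-\frac{(p-1)^2\sigma}{p(\sigma-p+1)}$, which is precisely the exponent of $t$ in \eqref{2.10}. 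Inserting this into the product of the three factors, absorbing $s$ and the constant from Lemma \ref{LE_tech1} into a single constant $C$, and recalling the first display of this proof then yields \eqref{2.10}.

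The argument is entirely a matter of bookkeeping; there is no conceptual obstacle. The points requiring care are the choice of the three H\"older exponents and of the splitting $a_1,a_2,a_3$ of the power of $\varphi$, so that the first factor reproduces exactly the left-hand side of Lemma \ref{LE_tech1} while $a_3$ stays nonnegative---this is what pins down the hypotheses $s\ge\frac{2p\sigma}{\sigma-p+1}$ and $t<\frac{\sigma-p+1}{2(p-1)}$---and the verification that the various powers of $t$ coming from Lemma \ref{LE_tech1} combine to the exponent displayed in \eqref{2.10}. As in the proof of Lemma \ref{LE_tech1}, one should also keep in mind that $\varphi^s$ is an admissible test function and that the formally indeterminate quantity $u^{-(t+1)}\abs{\nabla u}^p$ on $M\setminus\Omega$ is dealt with by the factor $\chi_\Omega$ and the vanishing of $\nabla u$ there.
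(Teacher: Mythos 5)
Your proof is correct and follows essentially the same route as the paper: test \eqref{EQ_weakSolGen} with $\psi=\varphi^s$, control the gradient factor via Lemma \ref{LE_tech1}, and use H\"older to split off the $\int_{M\setminus K}\varphi^su^\sigma V\,d\mu$ and $\int_{M\setminus K}V^{-\frac{(t+1)(p-1)}{\sigma-(t+1)(p-1)}}\abs{\nabla\varphi}^{\frac{p\sigma}{\sigma-(t+1)(p-1)}}\,d\mu$ factors, with the hypotheses on $s$ and $t$ entering exactly where you say, to guarantee the leftover power of $\varphi$ is nonnegative. The only (cosmetic) difference is that you perform a single three-exponent H\"older inequality where the paper applies the two-exponent version twice in succession; the exponents, factors and constants come out identically.
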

%%%***Remark per noi: sotto le ipotesi su $\sigma$ vale $s>p$***

\begin{proof}[Proof of Lemma \ref{LE_tech2}]

Under our assumptions $\psi=\varphi^s$ is a feasible test function
in equation \eqref{EQ_weakSolGen}. Thus we obtain
\begin{equation}\label{6}
 \int_M\varphi^su^\sigma V\,d\mu\leq \int_M s\varphi^{s-1}\abs{\nabla u}^{p-2}\pair{\nabla u,\nabla\varphi}\,d\mu
   \leq \int_M s\varphi^{s-1}\abs{\nabla
   u}^{p-1}\abs{\nabla\varphi}\,d\mu.
\end{equation}
Now let $\Omega = \set{x\in M : u(x)>0}$ and let $\chi_\Omega$ be
the characteristic function of $\Omega$. Since $\nabla u=0$ a.e. on
the set $M\setminus\Omega$, through an application of H\"{o}lder's
inequality we obtain
\begin{align}
  \label{7}\int_M s\varphi^{s-1}\abs{\nabla u}^{p-1}\abs{\nabla\varphi}\,d\mu&=\int_M s\varphi^{s-1}\abs{\nabla u}^{p-1}\chi_\Omega\abs{\nabla\varphi}\,d\mu\\
  \nonumber&=s\int_M\pa{\varphi^{\frac{p-1}{p}s} \abs{\nabla u}^{p-1}u^{-\frac{p-1}{p}(t+1)}\chi_\Omega}
            \pa{\varphi^{\frac{s}{p}-1}u^{\frac{p-1}{p}(t+1)}\abs{\nabla\varphi}}\,d\mu\\
  \nonumber&\leq s\pa{\int_M\varphi^s \abs{\nabla u}^pu^{-t-1}\chi_\Omega\,d\mu}^\frac{p-1}{p}
            \pa{\int_M\varphi^{s-p}u^{(p-1)(t+1)}\abs{\nabla\varphi}^p\,d\mu}^\frac{1}{p}.
\end{align}
Moreover from equation \eqref{EQ_2.6Lemma_tech1} we deduce
\begin{equation}\label{2.6}
 \int_M{\varphi^s\abs{\nabla u}^pu^{-t-1}\chi_\Omega}\,d\mu \leq C
  t^{-1-\frac{(p-1)\sigma}{\sigma-p+1}}\int_M{V^{-\frac{p-t-1}{\sigma-p+1}}\abs{\nabla\varphi}^{\frac{p(\sigma-t)}{\sigma-p+1}}}\,d\mu,
\end{equation}
with $C>0$ depending on $s$. Thus from \eqref{6}, \eqref{7} and
\eqref{2.6} we obtain
\begin{equation}\label{2.8}
  \int_M\varphi^su^\sigma V\,d\mu\leq C
  \pa{t^{-1-\frac{(p-1)\sigma}{\sigma-p+1}}\int_M{V^{-\frac{p-t-1}{\sigma-p+1}}\abs{\nabla\varphi}^{\frac{p(\sigma-t)}{\sigma-p+1}}}\,d\mu}^\frac{p-1}{p}
  \pa{\int_M \varphi^{s-p} u^{(p-1)(t+1)}\abs{\nabla\varphi}^p\,d\mu}^\frac{1}{p}.
\end{equation}
Now we use again H\"{o}lder's inequality with exponents
\[
q=\frac{\sigma}{(t+1)(p-1)}, \qquad
q'=\frac{q}{q-1}=\frac{\sigma}{\sigma-(t+1)(p-1)}
\]
to obtain
\begin{align*}
 &\int_M \varphi^{s-p}u^{(p-1)(t+1)}\abs{\nabla\varphi}^p\,d\mu\\
 &\qquad=\int_{M\setminus
     K}\pa{\varphi^\frac{s}{q}u^{(p-1)(t+1)}V^\frac{1}{q}}\pa{\varphi^{\frac{s}{q'}-p}V^{-\frac{1}{q}}\abs{\nabla\varphi}^p}\,d\mu\\
 &\qquad\leq \pa{\int_{M\setminus K}\varphi^su^\sigma
     V\,d\mu}^\frac{(t+1)(p-1)}{\sigma}\pa{\int_{M\setminus
     K}\varphi^{s-\frac{p\sigma}{\sigma-(t+1)(p-1)}} V^{-\frac{(t+1)(p-1)}{\sigma-(t+1)(p-1)}}
     \abs{\nabla\varphi}^\frac{p\sigma}{\sigma-(t+1)(p-1)}\,d\mu}^\frac{\sigma-(t+1)(p-1)}{\sigma}.
\end{align*}
Substituting into \eqref{2.8} we get
\begin{align*}
  \int_M\varphi^su^\sigma V\,d\mu\leq C t^{-\frac{p-1}{p}-\frac{(p-1)^2\sigma}{p(\sigma-p+1)}}&
     \pa{\int_M{V^{-\frac{p-t-1}{\sigma-p+1}}\abs{\nabla\varphi}^{\frac{p(\sigma-t)}{\sigma-p+1}}}\,d\mu}^\frac{p-1}{p}
     \pa{\int_{M\setminus K}\varphi^su^\sigma V\,d\mu}^\frac{(t+1)(p-1)}{p\sigma}\\
  \nonumber&\pa{\int_{M\setminus K}\varphi^{s-\frac{p\sigma}{\sigma-(t+1)(p-1)}} V^{-\frac{(t+1)(p-1)}{\sigma-(t+1)(p-1)}}
     \abs{\nabla\varphi}^\frac{p\sigma}{\sigma-(t+1)(p-1)}\,d\mu}^\frac{\sigma-(t+1)(p-1)}{p\sigma}.
\end{align*}
Now inequality \eqref{2.10} immediately follows from the previous
relation, by our assumptions on $s,t$ and since $0\leq\varphi\leq1$.
\end{proof}

From Lemma \ref{LE_tech2} we immediately deduce
\begin{cor}
Under the same assumptions of Lemma \ref{LE_tech2} there exists a
constant $C>0$, independent of $u$, $\varphi$ and $t$, such that
\begin{align}\label{EQ_2.6Lemma_tech2}
   &\pa{\int_M{\varphi^su^{\sigma}V\,d\mu}}^{1-\frac{\pa{t+1}\pa{p-1}}{p\sigma}} \\
   \nonumber &\quad\leq C t^{-\frac{p-1}{p}-\frac{\pa{p-1}^2\sigma}{p\pa{\sigma-p+1}}}
   \pa{\int_M{V^{-\frac{p-t-1}{\sigma-p+1}}\abs{\nabla\varphi}^{\frac{p(\sigma-t)}{\sigma-p+1}}}\, d\mu}^{\frac{p-1}{p}}
   \pa{\int_M V^{-\frac{\pa{t+1}\pa{p-1}}{\sigma-\pa{t+1}\pa{p-1}}}\abs{\nabla\varphi}^{\frac{p\sigma}{\sigma-\pa{t+1}\pa{p-1}}}\,d\mu}^{\frac{\sigma-\pa{t+1}\pa{p-1}}{p\sigma}}.
\end{align}
\end{cor}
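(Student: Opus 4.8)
The plan is to read off \eqref{EQ_2.6Lemma_tech2} from inequality \eqref{2.10} of Lemma \ref{LE_tech2} by two elementary manipulations: enlarging the domains of integration, and then absorbing a power of $\int_M\varphi^su^\sigma V\,d\mu$ into the left-hand side. First I would observe that every integrand appearing in \eqref{2.10} is nonnegative and that $M\setminus K\subseteq M$; hence replacing $M\setminus K$ by $M$ in each of the three integrals over $M\setminus K$ only increases the right-hand side, so that \eqref{2.10} remains valid after this replacement. In particular the first and third factors on the right turn into the corresponding integrals over all of $M$, the middle factor is already over $M$, and neither the constant $C$ nor the power of $t$ is affected.

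Next I would move the factor $\pa{\int_M\varphi^su^\sigma V\,d\mu}^{\frac{(t+1)(p-1)}{p\sigma}}$ from the right-hand side to the left. Since $\sigma>p-1$ and $t<\frac{\sigma-p+1}{2(p-1)}$, we have $(t+1)(p-1)<\frac{\sigma+p-1}{2}<\sigma$, so the exponent $1-\frac{(t+1)(p-1)}{p\sigma}$ on the left is strictly positive (it even exceeds $1-\tfrac1p$). Dividing the modified inequality by $\pa{\int_M\varphi^su^\sigma V\,d\mu}^{\frac{(t+1)(p-1)}{p\sigma}}$ then yields precisely \eqref{EQ_2.6Lemma_tech2}, with the same constant $C$ and the same negative power of $t$ as in Lemma \ref{LE_tech2}.

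The one point deserving care — and really the only potential obstacle — is the legitimacy of this division, namely that $\int_M\varphi^su^\sigma V\,d\mu$ is finite and, when positive, may be cancelled. Finiteness is immediate from the hypotheses: $\varphi$ is Lipschitz with compact support and $0\le\varphi\le1$, $u\in L^\sigma_{\text{loc}}(M,V\,d\mu_0)$, and $a$ is locally bounded, so $\int_M\varphi^su^\sigma V\,d\mu\le\pa{\sup_{\supp\varphi}a}\int_{\supp\varphi}u^\sigma V\,d\mu_0<\infty$. If this integral equals $0$, then the left-hand side of \eqref{EQ_2.6Lemma_tech2} vanishes and there is nothing to prove; otherwise it is a positive finite number and the division above is valid. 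Beyond this bookkeeping there is no further difficulty, the corollary being a straightforward rearrangement of Lemma \ref{LE_tech2}.
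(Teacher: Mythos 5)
Your proposal is correct and follows essentially the same route as the paper, which simply rearranges \eqref{2.10}: enlarge the two integrals over $M\setminus K$ to integrals over $M$ by nonnegativity of the integrands, then divide through by the finite quantity $\pa{\int_M\varphi^su^\sigma V\,d\mu}^{\frac{(t+1)(p-1)}{p\sigma}}$ (handling the zero case trivially). Your verification that $1-\frac{(t+1)(p-1)}{p\sigma}>0$ and that the integral is finite is exactly the bookkeeping the paper leaves implicit; the only slip is that there are two, not three, integrals over $M\setminus K$ in \eqref{2.10}, which does not affect the argument.
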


\begin{proof}
Inequality \eqref{EQ_2.6Lemma_tech2} easily follows form
\eqref{2.10}, since $s\geq \frac{p\sigma}{\sigma-(t+1)(p-1)}$ and
$0\leq\varphi\leq1$ on $M$.
\end{proof}

\section{Proof of Theorem \ref{thm_intro3}}\label{sec3}

We divide the proof of Theorem \ref{thm_intro3} in three cases,
depending on which of the conditions (HP1), (HP2) or (HP3) is
assumed to hold (see Definition \ref{defi_volgrowth}).

\begin{proof}[Proof of Theorem \ref{thm_intro3}\,.] $(a)$ Assume that condition (HP1) holds (see \eqref{EQ_hp1}). Let
$r(x)$ be the distance of $x\in M$ from a fixed origin $o$, for any
fixed $R>0$ sufficiently large let $t=\frac{1}{\log R}$ and denote
by $B_R$ the metric ball centered at $o$ with radius $R$. Fix any
$C_1\geq\frac{C_0+p+2}{p\sigma}$ with $C_0$ as in condition
\eqref{EQ_hp1}, define for $x\in M$
\begin{equation}\label{8}
 \varphi(x)=\begin{cases}\begin{array}{ll}
   1&\quad\text{for }r(x)<R,\\
   \pa{\frac{r(x)}{R}}^{-C_1t}&\quad\text{for }r(x)\geq R
   \end{array}\end{cases}
\end{equation}
and for $n\in\enne$
\begin{equation}\label{9}
 \eta_n(x)=\begin{cases}\begin{array}{ll}
   1&\quad\text{for }r(x)<nR,\\
   2-\frac{r(x)}{nR}&\quad\text{for }nR\leq r(x)\leq 2nR,\\
   0&\quad\text{for }r(x)\geq2nR.
 \end{array}\end{cases}
\end{equation}
Let
\begin{equation}\label{10}
\varphi_n(x)=\eta_n(x)\varphi(x)\qquad\text{for }x\in M,
\end{equation}
then $\varphi_n\in \operatorname{Lip}(M)$ with
$0\leq\varphi_n\leq1$, we have
$$\nabla\varphi_n=\eta_n\nabla\varphi+\varphi\nabla\eta_n\qquad \text{a.e. in }M$$ and for
every $a\geq1$ $$|\nabla\varphi_n|^a\leq
2^{a-1}\pa{|\nabla\varphi|^a+\varphi^a|\nabla\eta_n|^a}\qquad
\text{a.e. in }M.$$ Now we use $\varphi_n$ in formula
\eqref{EQ_2.6Lemma_tech1} of Lemma \ref{LE_tech1} with any fixed
$s\geq\frac{p\sigma}{\sigma-p+1}$ and deduce that, for some positive
constant $C$ and for every $n\in\enne$ and every small enough $t>0$,
we have
\begin{align}
 \label{20}\int_M{V u^{\sigma-t}\varphi_n^s}\,d\mu& \leq
   Ct^{-\frac{(p-1)\sigma}{\sigma-p+1}}\int_M{V^{-\frac{p-t-1}{\sigma-p+1}}\abs{\nabla\varphi_n}^{\frac{p(\sigma-t)}{\sigma-p+1}}}\,d\mu\\
 \nonumber &= C  t^{-\frac{(p-1)\sigma}{\sigma-p+1}} \int_M
   V^{-\beta+\frac{t}{\sigma-p+1}}|\nabla\varphi_n^{\frac{p(\sigma-t)}{\sigma-p+1}}d\mu\\
 \nonumber&\leq Ct^{-\frac{(p-1)\sigma}{\sigma-p+1}}2^{\frac{p(\sigma-t)}{\sigma-p+1}-1}
   \sq{\int_M{V^{-\beta+\frac{t}{\sigma-p+1}}\abs{\nabla\varphi}^{\frac{p(\sigma-t)}{\sigma-p+1}}}\,d\mu+
   \int_{B_{2nR}\setminus
   B_{nR}}{V^{-\beta+\frac{t}{\sigma-p+1}}\varphi^{\frac{p(\sigma-t)}{\sigma-p+1}}}\abs{\nabla\eta_n}^{\frac{p(\sigma-t)}{\sigma-p+1}}\,d\mu}\\
 \nonumber&\leq Ct^{-\frac{(p-1)\sigma}{\sigma-p+1}}\sq{I_1+I_2},
\end{align}
where
\begin{align*}
 I_1&:= \int_{M\setminus B_R}V^{-\beta+\frac{t}{\sigma-p+1}}|\nabla\varphi|^{\frac{p(\sigma-t)}{\sigma-p+1}}d\mu,\\
 I_2&:=  \int_{B_{2nR}\setminus B_{nR}}\varphi^{\frac{p(\sigma-t)}{\sigma-p+1}}|\nabla\eta_n|^{\frac{p(\sigma-t)}{\sigma-p+1}}V^{-\beta+\frac{t}{\sigma-p+1}}d\mu.
\end{align*}
By \eqref{8}, \eqref{9} and assumption (HP1) with
$\eps=\frac{t}{\sigma-p+1}$, see equation \eqref{EQ_hp1}, for every
$n\in\enne$ and every small enough $t>0$ we have
\begin{align}
 \label{217a}I_2 & \leq \left(\sup_{B_{2nR}\setminus B_{nR}} \varphi\right)^{\frac{p(\sigma-t)}{\sigma-p+1}}\left(\frac 1{nR}\right)^{\frac{p(\sigma-t)}{\sigma-p+1}}
   \int_{B_{2nR}\setminus B_{nR}} V^{-\beta + \frac{t}{\sigma-p+1}}\,d\mu \\
 \nonumber& \leq C \left(\frac{nR}{R}\right)^{-\frac{p(\sigma-t)}{\sigma-p+1}C_1 t}\left(\frac 1{nR} \right)^{\frac{p(\sigma-t)}{\sigma-p+1}}
   (2nR)^{\alpha+ \frac{C_0t}{\sigma-p+1}}[\log(2nR)]^k \\
 \nonumber& \leq C n^{\alpha+\frac{C_0 t}{\sigma-p+1}-\frac{p(\sigma-t)}{\sigma-p+1}(C_1t+1)} R^{\alpha+\frac{C_0 t}{\sigma-p+1}
   -\frac{p(\sigma-t)}{\sigma-p+1}}[\log(2n R)]^k\,.
\end{align}
By our choice of $C_1$, for every small enough $t>0$
\begin{equation}\label{217b}
\alpha+\frac{C_0 t}{\sigma-p+1}-
\frac{p(\sigma-t)}{\sigma-p+1}(C_1t+1)= \frac{t(C_0 -p\sigma C_1 +
pC_1 t+p)}{\sigma-p+1}\leq -\frac{t}{\sigma-p+1}<0.
\end{equation}
Moreover, since $t=\frac{1}{\log R}$, we have
\[
 R^{\alpha+\frac{C_0 t}{\sigma-p+1}
   -\frac{p(\sigma-t)}{\sigma-p+1}}=R^{\frac{C_0+p}{\sigma-p+1}t}=e^{\frac{C_0+p}{\sigma-p+1}t\log
 R}=e^{\frac{C_0+p}{\sigma-p+1}}
\]
In view of \eqref{217a} and \eqref{217b} for $R>1$ large enough, and
thus $t=\frac 1{\log R}$ small enough, we obtain
\begin{equation}\label{218}
I_2 \leq C n^{-\frac{t}{\sigma-p+1}}[\log(2n R)]^k\,.
\end{equation}

In order to estimate $I_1$ we recall that if
$f:[0,\infty)\rightarrow[0,\infty)$ is a nonnegative decreasing
function and \eqref{EQ_hp1} holds, then for any small enough
$\eps>0$ and any sufficiently large $R>1$ we have
\begin{equation}\label{219}
\int_{M\setminus B_R} f(r(x)) \pa{V(x)}^{-\beta+\eps}\,d\mu\leq
C\int_{\frac{R}{2}}^{+\infty}f(r)r^{\alpha+C_0\eps-1}(\log r)^k\,dr
\end{equation}
for some positive constant $C$, see \cite[formula (2.19)]{GrigS}.
Moreover, there holds
\begin{equation}\label{219a}
|\nabla \varphi| \leq C_1 t R^{C_1 t} r^{-C_1 t -1}\,.
\end{equation}
Thus, using \eqref{219}-\eqref{219a},
\begin{align*}
  I_1 &\leq \int_{M\setminus B_R} V^{-\beta+\frac{t}{\sigma-p+1}}(R^{C_1 t}C_1 t r^{-C_1t-1})^{\frac{p(\sigma-t)}{\sigma-p+1}}\, d\mu\\
  &\leq C \int_{\frac{R}{2}}^\infty R^{\frac{p(\sigma-t)}{\sigma-p+1}C_1 t} (1+C_1)^{\frac{p\sigma}{\sigma-p+1}} t^{\frac{p(\sigma-t)}{\sigma-p+1}}
     r^{-\frac{p(\sigma-t)}{\sigma-p+1}(C_1t+1)+\alpha+C_0\frac{t}{\sigma-p+1}-1}(\log r)^k\, dr.
\end{align*}
Now note that $$R^{\frac{p(\sigma-t)}{\sigma-p+1}C_1
t}=e^{\frac{p(\sigma-t)}{\sigma-p+1}C_1}<e^\frac{p\sigma
C_1}{\sigma-p+1}$$ and that by our choice of $C_1$ we have
\[a:= -\frac{p(\sigma-t)}{\sigma-p+1}(C_1t+1)+\alpha+C_0\frac{t}{\sigma-p+1}=\frac t{\sigma-p+1}(pC_1 t-p\sigma C_1 +p+C_0)\leq -\frac{t}{\sigma-p+1}<0\,.\]
Then, by the above inequalities and performing the change of
variables $\xi:=|a|\log r$, we get
\begin{align}
  \label{11}I_1& \leq C t^{\frac{p(\sigma-t)}{\sigma-p+1}}\int_{1}^\infty
     r^{-\frac{p(\sigma-t)}{\sigma-p+1}(C_1t+1)+\alpha+C_0\frac{t}{\sigma-p+1}}(\log r)^k\,\frac{dr}{r}\\
  \nonumber&\leq C |a|^{-(k+1)} t^{\frac{p(\sigma-t)}{\sigma-p+1}} \int_0^\infty e^{-\xi} \xi^k\, d\xi\\
  \nonumber&\leq C\pa{\frac{t}{\sigma-p+1}}^{-k-1}t^{\frac{p(\sigma-t)}{\sigma-p+1}} \int_0^\infty e^{-\xi} \xi^k\,
    d\xi\\
  \nonumber&\leq C t^{\frac{p(\sigma-t)}{\sigma-p+1}-k-1}\,.
\end{align}
By \eqref{20}, \eqref{218} and \eqref{11}
\begin{align}
  \label{219g}\int_{B_R} V u^{\sigma-t}\, d\mu\,& \leq \int_M V u^{\sigma-t}\varphi_n^s\, d\mu\\
  \nonumber& \leq C t^{-\frac{(p-1)\sigma}{\sigma-p+1}}[ n^{-\frac{t}{\sigma-p+1}}(\log(2n R))^k
    + t^{\frac{p(\sigma-t)}{\sigma-p+1}-k-1}].
\end{align}
Since $R>1$ is large and fixed, and thus $t=\frac 1{\log R} < 1$
is also fixed, taking the $\liminf$ as $n\to \infty$ in
\eqref{219g} we obtain
\begin{equation}\label{219h}
\int_{B_R} V u^{\sigma-t} d\mu \leq C t^{\frac{p(\sigma-t)}{\sigma-p+1}-k-1-\frac{(p-1)\sigma}{\sigma-p+1}}.
\end{equation}
Observe that, for each small enough $t>0$,
$$\frac{p(\sigma-t)}{\sigma-p+1}-k-1-\frac{(p-1)\sigma}{\sigma-p+1} = \frac{p-1}{\sigma-p+1}-k-\frac{pt}{\sigma-p+1}=\beta-k-\frac{pt}{\sigma-p+1}\geq \delta_*>0\,.$$
Then, for any fixed sufficiently small $t>0$, we have
\[\int_M V u^{\sigma-t}\chi_{B_{e^{1/t}}}\,d\mu = \int_{B_R} V u^{\sigma-t}\, d\mu \leq C t^{\delta_*}\,. \]
By Fatou's Lemma, taking the $\liminf$ as $t\to 0^+$ in the previous
inequality we obtain
\[ \int_M V u^\sigma\, d\mu \leq 0,\]
which implies $u\equiv 0$ in $M$.

\medskip

\noindent $(b)$ Assume that condition (HP2) holds (see
\eqref{EQ_hp2}). Let the functions $\varphi$, $\eta_n$ and
$\varphi_n$ be defined on $M$ as in formulas \eqref{8}, \eqref{9}
and \eqref{10}, with $R>1$ large enough, $t=\frac{1}{\log R}$,
$C_1\geq\max\set{\frac{C_0+p+2}{p\sigma},\frac{C_0}{\sigma-p+1}}$
and $C_0$ as in condition \eqref{EQ_hp2}. We now apply formula
\eqref{EQ_2.6Lemma_tech2}, using the family of functions
$\varphi_n\in \operatorname{Lip}_0(M)$ and any fixed
$s\geq\frac{2p\sigma}{\sigma-p+1}$, and thus we have
\begin{align*}
   &\pa{\int_M{\varphi_n^su^{\sigma}V\,d\mu}}^{1-\frac{\pa{t+1}\pa{p-1}}{p\sigma}} \\
   \nonumber &\quad\leq C t^{-\frac{p-1}{p}-\frac{\pa{p-1}^2\sigma}{p\pa{\sigma-p+1}}}
   \pa{\int_M{V^{-\frac{p-t-1}{\sigma-p+1}}\abs{\nabla\varphi_n}^{\frac{p(\sigma-t)}{\sigma-p+1}}}\, d\mu}^{\frac{p-1}{p}}
   \pa{\int_M V^{-\frac{\pa{t+1}\pa{p-1}}{\sigma-\pa{t+1}\pa{p-1}}}\abs{\nabla\varphi_n}^{\frac{p\sigma}{\sigma-\pa{t+1}\pa{p-1}}}\,d\mu}^{\frac{\sigma-\pa{t+1}\pa{p-1}}{p\sigma}}.
\end{align*}
We now need need to estimate
  \begin{equation}\label{EQ_2.11in2_a}
    \int_M V^{-\frac{p-t-1}{\sigma-p+1}}\abs{\nabla
    \varphi}^{\frac{p(\sigma-t)}{\sigma-p+1}}\,d\mu\qquad\text{ and
    }\qquad
    \int_M V^{-\frac{(t+1)(p-1)}{\sigma-\pa{t+1}\pa{p-1}}}\abs{\nabla \varphi}^{\frac{p\sigma}{\sigma-\pa{t+1}\pa{p-1}}}\,d\mu.
  \end{equation}
Arguing as in the previous proof of the theorem under the validity
of condition (HP1), with the only difference that the condition
$k<\beta$ there is replaced here by $k=\beta$, using \eqref{EQ_hp2}
we can deduce that
  \begin{equation}\label{15}
  \int_M V^{-\frac{p-t-1}{\sigma-p+1}}\abs{\nabla \varphi_n}^{\frac{p(\sigma-t)}{\sigma-p+1}}\,d\mu\leq C\sq{n^{-\frac{t}{\sigma-p+1}}(\log\pa{2nR})^\beta
  +t^{\frac{p(\sigma-t)}{\sigma-p+1}-\beta-1}}.
  \end{equation}
In order to estimate the second integral in \eqref{EQ_2.11in2_a} we
start by defining $\Lambda = \frac{(p-1)\sigma
t}{(\sigma-p+1)\sq{\sigma-\pa{t+1}\pa{p-1}}}$, and we note that
\begin{equation}\label{12}
   \frac{(p-1)\sigma}{\pa{\sigma-p+1}^2}t<\Lambda<\frac{2(p-1)\sigma}{\pa{\sigma-p+1}^2}t<\eps^*
\end{equation}
for every small enough $t>0$, and that
  \[\frac{(t+1)(p-1)}{\sigma-\pa{t+1}\pa{p-1}}=\beta+\Lambda
  \qquad\text{ and }\qquad\frac{p\sigma}{\sigma-\pa{t+1}\pa{p-1}}=\alpha +\Lambda
  p,
  \]
with $\alpha,\beta$ as in Definition \ref{defi_volgrowth}. By our
definition of the functions $\varphi_n$, for every $n\in\enne$ and
every small enough $t>0$ we have
\begin{align}
  \label{14}\int_M V^{-\beta-\Lambda}\abs{\nabla \varphi_n}^{\alpha+\Lambda p}\,d\mu& \leq
    C\sq{\int_M V^{-\beta-\Lambda}{\eta_n}^{\alpha+\Lambda p}\abs{\nabla \varphi}^{\alpha+\Lambda p}\,d\mu
      +\int_M V^{-\beta-\Lambda} \varphi^{\alpha+\Lambda p}\abs{\nabla \eta_n}^{\alpha+\Lambda p}\,d\mu}\\
  \nonumber  &\leq C\sq{\int_{M\setminus B_R} V^{-\beta-\Lambda} \abs{\nabla \varphi}^{\alpha+\Lambda p}\,d\mu
      +\int_{B_{2nR}\setminus B_{nR}} V^{-\beta-\Lambda} \varphi^{\alpha+\Lambda p}\abs{\nabla\eta_n}^{\alpha+\Lambda p}\,d\mu}\\
   \nonumber &:=C\pa{I_1+I_2}.
\end{align}
Now we use condition \eqref{EQ_hp2} with $\eps=\Lambda$, and we
obtain
   \begin{align*}
    I_2= \int_{B_{2nR}\setminus B_{nR}} V^{-\beta-\Lambda} \varphi^{\alpha+\Lambda p}\abs{\nabla \eta_n}^{\alpha+\Lambda p}\,d\mu
       &\leq \pa{\sup_{B_{2nR}\setminus B_{nR}}\varphi}^{\alpha+\Lambda p}\pa{\frac{1}{nR}}^{\alpha+\Lambda p}\pa{ \int_{B_{2nR}\setminus B_{nR}}
           V^{-\beta-\Lambda}\,d\mu} \\
    &\leq Cn^{-\pa{\alpha+\Lambda p}C_1t}\pa{\frac{1}{nR}}^{\alpha+\Lambda p}\pa{2nR}^{\alpha+C_0\Lambda}\pa{\log\pa{2nR}}^\beta \\
    &\leq C n^{-\pa{\alpha+\Lambda p}C_1t-p\Lambda +C_0\Lambda }R^{-p\Lambda + C_0\Lambda}\pa{\log\pa{2nR}}^\beta.
   \end{align*}
By our definition of $C_1,\Lambda$ and by relation \eqref{12} we
easily find
   \begin{align}
     \label{13} -C_1\pa{\alpha+\Lambda p}t-\Lambda p+\Lambda C_0 &< -\frac{p\sigma tC_1}{\sigma-\pa{t+1}\pa{p-1}}+\frac{\sigma t(p-1)C_0}{\sq{\sigma-\pa{t+1}\pa{p-1}}\pa{\sigma-p+1}} \\
     \nonumber&\leq-\frac{\sigma tC_0}{\sq{\sigma-\pa{t+1}\pa{p-1}}\pa{\sigma-p+1}} < -\frac{\sigma tC_0}{\pa{\sigma-p+1}^2} <0,
   \end{align}
for any small enough $t>0$. Moreover by \eqref{12}, since
$t=\frac{1}{\log R}$, we have
\[
R^{-p\Lambda+C_0\Lambda}\leq R^{C_0\Lambda}\leq
R^{\frac{2(p-1)\sigma C_0t}{(\sigma-p-1)^2}}=e^\frac{2(p-1)\sigma
C_0}{(\sigma-p-1)^2}.
\]
Thus, for any sufficiently large $R>0$,
\begin{equation}\label{EQ_EstI2}
    I_2 \leq C n^{ -\frac{\sigma tC_0}{\pa{\sigma-p+1}^2}}\pa{\log\pa{2nR}}^\beta.
\end{equation}
In order to estimate $I_1$ we note that if
$f:[0,\infty)\rightarrow[0,\infty)$ is a nonnegative decreasing
function and \eqref{EQ_hp2} holds, then for any small enough
$\eps>0$ and any sufficiently large $R>1$ we have
\begin{equation}\label{219}
\int_{M\setminus B_R} f(r(x)) \pa{V(x)}^{-\beta-\eps}\,d\mu\leq
C\int_{\frac{R}{2}}^{+\infty}f(r)r^{\alpha+C_0\eps-1}(\log
r)^\beta\,dr
\end{equation}
for some positive constant $C$, see \eqref{219} and \cite[formula
(2.19)]{GrigS}. Thus, noting that $\abs{\nabla \varphi}\leq C_1
tR^{C_1t}r^{-C_1t-1}$ a.e. on $M$ and using \eqref{12}, for every
small enough $t>0$ we have
  \begin{align*}
    I_1 &\leq \int_{M\setminus B_R} V^{-\beta-\Lambda}\pa{C_1 tR^{C_1t}r^{-C_1t-1}}^{\alpha+\Lambda p}\,d\mu \\
    &\leq C\int_{\frac R2}^\infty R^{(\alpha+\Lambda p)C_1t}\pa{1+C_1}^{\alpha+\Lambda p}t^{\alpha+\Lambda p}
       r^{-(\alpha+\Lambda p)(C_1t+1)+\alpha+C_0\Lambda-1}(\log r)^\beta\,dr \\
  \end{align*}
  Now, since $t=\frac{1}{\log R}$, by relation \eqref{12}we have
  \[
      R^{(\alpha+\Lambda p)C_1t}= e^{(\alpha+\Lambda p)C_1}\leq
      e^{(\alpha+\eps^*p)C_1};
  \]
  moreover, as we noted already in \eqref{13}, for $t>0$ small enough
  \[
  b=-(\alpha+\Lambda
  p)(C_1t+1)+\alpha+C_0\Lambda<-\frac{C_0t\sigma}{\pa{\sigma-p+1}^2}<0.
  \]
  With the change of variables $\xi=\abs{b}\log r$, using the previous relations we find
  \begin{align}
    \label{EQ_EstI1}  I_1 &\leq C t^{\alpha+\Lambda p}\int_1^\infty r^b\pa{\log r}^\beta\,\frac{dr}{r}= C t^{\alpha+\Lambda p}\abs{b}^{-\beta-1}\pa{\int_0^\infty
       e^{-\xi}\xi^\beta\,d\xi}\\
    \nonumber&\leq C\pa{\frac{\pa{\sigma-p+1}^2}{C_0\sigma}}^{\beta+1}t^{\alpha+\Lambda p-\beta-1}=Ct^{\alpha+\Lambda  p-\beta-1}.
  \end{align}
  From equations \eqref{14}, \eqref{EQ_EstI2} and \eqref{EQ_EstI1} it follows that
  \begin{align}\label{EQ_EstI1plusI2}
      \int_M V^{-\beta-\Lambda}\abs{\nabla \varphi_n}^{\alpha+\Lambda p}\,d\mu&\leq C\sq{t^{\alpha+\Lambda  p-\beta-1}+n^{ -\frac{\sigma tC_0}{\pa{\sigma-p+1}^2}}\pa{\log\pa{2nR}}^\beta}.
  \end{align}
  From \eqref{EQ_2.6Lemma_tech2}, using \eqref{15} and \eqref{EQ_EstI1plusI2} then we have
  \begin{align*}
    \pa{\int_{B_R}u^\sigma V\,d\mu}^{1-\frac{(t+1)(p-1)}{p\sigma}} &\leq \pa{\int_{M}\varphi_n^su^\sigma V\,d\mu}^{1-\frac{(t+1)(p-1)}{p\sigma}}\\
    &\leq C t^{-\frac{p-1}{p}-\frac{\pa{p-1}^2\sigma}{p\pa{\sigma-p+1}}}\pa{\int_{M}V^{-\frac{p-t-1}{\sigma-p+1}}
       \abs{\nabla\varphi_n}^{\frac{p\pa{\sigma-t}}{\sigma-p+1}}\,d\mu}^{\frac{p-1}{p}} \\
    &\,\,\,\,\,\, \times\pa{\int_MV^{-\beta-\Lambda}\abs{\nabla\varphi_n}^{\alpha+\Lambda p}\,d\mu}^\frac{1}{\alpha+\Lambda p} \\
    &\leq C t^{-\frac{p-1}{p}-\frac{\pa{p-1}^2\sigma}{p\pa{\sigma-p+1}}}\sq{n^{-\frac{t}{\sigma-p+1}}(\log\pa{2nR})^\beta
       +t^{\frac{p(\sigma-t)}{\sigma-p+1}-\beta-1}}^{\frac{p-1}{p}} \\
    &\,\,\,\,\,\,\times\sq{t^{\alpha+\Lambda  p-\beta-1}+n^{ -\frac{\sigma tC_0}{\pa{\sigma-p+1}^2}}\pa{\log\pa{2nR}}^\beta}^{\frac{1}{\alpha+\Lambda p}}
  \end{align*}
  By taking the $\liminf$ as $n\ra+\infty$ we get
  \begin{align*}
  \pa{\int_{B_R}u^\sigma V\,d\mu}^{1-\frac{(t+1)(p-1)}{p\sigma}} &\leq C
    t^{-\frac{p-1}{p}-\frac{\pa{p-1}^2\sigma}{p\pa{\sigma-p+1}}+\frac{(p-1)(\sigma-t)}{\sigma-p+1}-\frac{(\beta+1)(p-1)}{p}+1-\frac{\beta+1}{\alpha+\Lambda p}}
%    t^{-\frac{p-1}{p}-\frac{\pa{p-1}^2\sigma}{p\pa{\sigma-p+1}}+\frac{(p-1)(\sigma-t)}{\sigma-p+1}-\frac{\sigma(p-1)}{p\pa{\sigma-p+1}}+1-\frac{\sigma-(t+1)(p-1)}{p\pa{\sigma-p+1}}};
  \end{align*}
  for every sufficiently small $t>0$, with $t=\frac{1}{\log R}$. But
  \[
  -\frac{p-1}{p}-\frac{\pa{p-1}^2\sigma}{p\pa{\sigma-p+1}}+\frac{(p-1)(\sigma-t)}{\sigma-p+1}-\frac{(\beta+1)(p-1)}{p}+1-\frac{\beta+1}{\alpha+\Lambda p}
  =-\frac{(p-1)^2}{p\pa{\sigma-p+1}}t,
  \]
 hence for every small enough $t>0$ we have
  \begin{equation*}
    \pa{\int_{B_{e^{1/t}}}u^\sigma V\,d\mu}^{1-\frac{(t+1)(p-1)}{p\sigma}} \leq C
    t^{-\frac{(p-1)^2}{p\pa{\sigma-p+1}}t}\leq C,
  \end{equation*}
  that is
  \begin{equation*}
    \int_{B_{e^{1/t}}}u^\sigma V\,d\mu \leq C
  \end{equation*}
  uniformly in $t$, for $t>0$ sufficiently small. By taking the limit for $t\ra0^+$ we deduce
  \begin{equation}
    \int_{M}u^\sigma V\,d\mu < +\infty,
  \end{equation}
  and thus $u\in L^\sigma(M,Vd\mu)$. Now we exploit inequality \eqref{2.10} with the cutoff function
  $\varphi_n$, and using again \eqref{15} and \eqref{EQ_EstI1plusI2} we obtain
  \begin{align*}
    \int_M\varphi_n^s u^\sigma V\,d\mu &\leq C t^{-\frac{p-1}{p}-\frac{\pa{p-1}^2\sigma}{p\pa{\sigma-p+1}}}
    \sq{n^{-\frac{t}{\sigma-p+1}}(\log\pa{2nR})^\beta+t^{\frac{p(\sigma-t)}{\sigma-p+1}-\beta-1}}^{\frac{p-1}{p}}
    \pa{\int_{M\setminus B_R}\varphi_n^s u^\sigma V\,d\mu}^{\frac{(t+1)(p-1)}{p\sigma}} \\
    &\,\,\,\,\,\,\times\sq{t^{\alpha+\Lambda  p-\beta-1}+n^{ -\frac{\sigma tC_0}{\pa{\sigma-p+1}^2}}\pa{\log\pa{2nR}}^\beta}^{\frac{1}{\alpha+\Lambda p}}.
  \end{align*}
  Since $\varphi_n\equiv 1$ on $B_R$ and $0<\varphi_n\leq 1$ on $M$, for all $n\in\mathds{N}$
  \[
   \int_{B_R} u^\sigma V\,d\mu\leq \int_M\varphi_n^s u^\sigma V\,d\mu, \qquad \int_{M\setminus B_R}\varphi_n^s u^\sigma V\,d\mu \leq \int_{M\setminus B_R} u^\sigma V\,d\mu.
  \]
  Using previous inequalities and taking the $\liminf$ as $n\ra+\infty$ we
  get
  \begin{align*}
    \int_{B_R} u^\sigma V\,d\mu &\leq C t^{-\frac{p-1}{p}-\frac{\pa{p-1}^2\sigma}{p\pa{\sigma-p+1}}+\frac{(p-1)(\sigma-t)}{\sigma-p+1}-\frac{(\beta+1)(p-1)}{p}
    +1-\frac{\beta+1}{\alpha+\Lambda p}}\pa{\int_{M\setminus B_R} u^\sigma V\,d\mu}^{\frac{(t+1)(p-1)}{p\sigma}} \\
    &= C t^{-\frac{(p-1)^2}{p\pa{\sigma-p+1}}t}\pa{\int_{M\setminus B_R} u^\sigma V\,d\mu}^{\frac{(t+1)(p-1)}{p\sigma}} \leq C \pa{\int_{M\setminus B_R} u^\sigma V\,d\mu}^{\frac{(t+1)(p-1)}{p\sigma}}
  \end{align*}
  uniformly for $t>0$ sufficiently small, with $t=\frac{1}{\log R}$. Since $u\in L^\sigma(M,Vd\mu)$,
  \[
  \int_{M\setminus B_R} u^\sigma V\,d\mu \ra 0 \quad \text{ as }\, R\ra +\infty.
  \]
  Moreover $\frac{(t+1)(p-1)}{p\sigma}\ra\frac{p-1}{p\sigma}>0$ as $R\ra +\infty$. It follows that
  \[
   \int_{M} u^\sigma V\,d\mu = \lim_{R\ra +\infty}\int_{B_R}u^\sigma V\,d\mu=0,
  \]
  which implies $u\equiv0$ on $M$.

\medskip

\noindent $(c)$ Assume that condition (HP3) holds (see
\eqref{EQ_hp3}). Consider the functions $\varphi$, $\eta_n$ and
$\varphi_n$ defined in \eqref{8}, \eqref{9} and \eqref{10}, with
$R>0$ large enough, $t=\frac{1}{\log R}$,
$C_1\geq\frac{C_0+p+2}{p\sigma}$ and $C_0$ as in condition
\eqref{EQ_hp3}. Arguing as in the previous proof of the theorem
under the assumption of the validity of (HP1), by formula
\eqref{EQ_2.6Lemma_tech1} with any fixed
$s\geq\frac{p\sigma}{\sigma-p+1}$, we see that
\begin{align}
 \label{2}\int_M{V u^{\sigma-t}\varphi_n^s}\,d\mu& \leq
   Ct^{-\frac{(p-1)\sigma}{\sigma-p+1}}\int_M{V^{-\frac{p-t-1}{\sigma-p+1}}\abs{\nabla\varphi_n}^{\frac{p(\sigma-t)}{\sigma-p+1}}}\,d\mu\\
 \nonumber&\leq Ct^{-\frac{(p-1)\sigma}{\sigma-p+1}}
   \sq{\int_M{V^{-\frac{p-t-1}{\sigma-p+1}}\abs{\nabla\varphi}^{\frac{p(\sigma-t)}{\sigma-p+1}}}\,d\mu+
   \int_{B_{2nR}\setminus
   B_{nR}}{V^{-\frac{p-t-1}{\sigma-p+1}}\varphi^{\frac{p(\sigma-t)}{\sigma-p+1}}}\abs{\nabla\eta_n}^{\frac{p(\sigma-t)}{\sigma-p+1}}\,d\mu}\\
 \nonumber&:=Ct^{-\frac{(p-1)\sigma}{\sigma-p+1}}\sq{I_1+I_2},
\end{align}
for some positive constant $C$ and for every $n\in\enne$ and every
small enough $t>0$. Now, recalling the definitions of $\varphi$ and
$\eta_n$, by condition \eqref{EQ_hp3} with
$\eps=\frac{t}{\sigma-p+1}$, for every small enough $t>0$ we have
\begin{align*}
  I_2&\leq \pa{\sup_{B_{2nR}\setminus B_{nR}}\varphi}^{\frac{p(\sigma-t)}{\sigma-p+1}}\pa{\frac{1}{nR}}^{\frac{p(\sigma-t)}{\sigma-p+1}}
     \int_{B_{2nR}\setminus B_{nR}}V^{-\beta+\frac{t}{\sigma-p+1}}\,d\mu\\
  &\leq Cn^{-\frac{p(\sigma-t)}{\sigma-p+1}C_1t}\pa{\frac{1}{nR}}^{\frac{p(\sigma-t)}{\sigma-p+1}}
     \pa{2nR}^{\alpha+\frac{C_0t}{\sigma-p+1}}\pa{\log (2nR)}^k e^{-\frac{\theta t}{\sigma-p+1} \pa{\log (2nR)}^\tau}\\
  &=Cn^{\frac{t}{\sigma-p+1}\pa{C_0-p\sigma C_1+pC_1t+p}}R^{\frac{C_0+p}{\sigma-p+1}t}\pa{\log (2nR)}^k e^{-\frac{\theta t}{\sigma-p+1}\pa{\log
     (2nR)}^\tau}.
\end{align*}
Note that, since $t=\frac{1}{\log R}$, we have
\[
 R^{\frac{C_0+p}{\sigma-p+1}t}=e^{\frac{C_0+p}{\sigma-p+1}t\log
 R}=e^{\frac{C_0+p}{\sigma-p+1}}
\]
and that by our choice of $C_1$, if $t>0$ is sufficiently small, we
have $\pa{C_0-p\sigma C_1+pC_1t+p}<-1$. Thus we conclude that
\begin{equation}\label{1}
I_2\leq C n^{-\frac{t}{\sigma-p+1}}\pa{\log (2nR)}^k.
\end{equation}

In order to estimate $I_1$ we note that if
$f:[0,\infty)\rightarrow[0,\infty)$ is a nonnegative decreasing
function and \eqref{EQ_hp3} holds, then for any small enough
$\eps>0$ and any sufficiently large $R>0$ we have
\begin{equation}\label{2.19}
\int_{M\setminus B_R} f(r(x)) \pa{V(x)}^{-\beta+\eps}\,d\mu\leq
C\int_{\frac{R}{2}}^{+\infty}f(r)r^{\alpha+C_0\eps-1}(\log
r)^ke^{-\eps\theta(\log r)^\tau}\,dr
\end{equation}
for some positive fixed constant $C$. Indeed, by the monotonicity of
the involved functions, using condition \eqref{EQ_hp3} we obtain in
a similar way as \cite[formula (2.19)]{GrigS}
\begin{align*}
 \int_{M\setminus B_R} f(r(x)) \pa{V(x)}^{-\beta+\eps}\,d\mu
   &=\sum_{i=0}^{+\infty}\int_{B_{2^{i+1}R}\setminus B_{2^iR}}f(r(x))\pa{V(x)}^{-\beta+\eps}\,d\mu\\
   &\leq\sum_{i=0}^{+\infty}f(2^iR)\int_{B_{2^{i+1}R}\setminus B_{2^iR}}V^{-\beta+\eps}\,d\mu\\
   &\leq C\sum_{i=0}^{+\infty}f(2^iR)e^{-\eps\theta(\log(2^{i+1}R))^\tau}(2^{i+1}R)^{\alpha+C_0\eps}(\log(2^{i+1}R))^k\\
   &\leq C\sum_{i=0}^{+\infty}f(2^iR)e^{-\eps\theta(\log(2^{i+1}R))^\tau}(2^{i-1}R)^{\alpha+C_0\eps}(\log(2^{i-1}R))^k\\
   &\leq C\sum_{i=0}^{+\infty}\int_{2^{i-1}R}^{2^iR}f(r)e^{-\eps\theta(\log r)^\tau}r^{\alpha+C_0\eps-1}(\log r)^k\,dr\\
   &= C\int_{\frac{R}{2}}^{+\infty}f(r)e^{-\eps\theta(\log r)^\tau}r^{\alpha+C_0\eps-1}(\log r)^k\,dr.
\end{align*}

Now, since for a.e. $x\in M$ we have $$\abs{\nabla\varphi(x)}\leq
C_1tR^{C_1t}(r(x))^{-C_1t-1},$$ using \eqref{2.19} with
$\eps=\frac{t}{\sigma-p+1}$, we obtain that for every small enough
$t>0$ with $t=\frac{1}{\log R}$
\begin{align*}
  I_1&\leq \int_{M\setminus B_R}
     V^{-\beta+\frac{t}{\sigma-p+1}}\pa{C_1tR^{C_1t}(r(x))^{-C_1t-1}}^\frac{p(\sigma-t)}{\sigma-p+1}\,d\mu\\
  &\leq C\int_{\frac{R}{2}}^{+\infty}R^{\frac{p(\sigma-t)C_1t}{\sigma-p+1}}C_1^\frac{p(\sigma-t)}{\sigma-p+1}
     t^\frac{p(\sigma-t)}{\sigma-p+1}r^{-\frac{p(\sigma-t)(C_1t+1)}{\sigma-p+1}+\alpha+\frac{C_0t}{\sigma-p+1}-1}(\log r)^k
     e^{-\frac{t\theta}{\sigma-p+1}(\log r)^\tau}\,dr.
\end{align*}
Note that
$C_1^\frac{p(\sigma-t)}{\sigma-p+1}\leq(1+C_1)^\frac{p\sigma}{\sigma-p+1}$
and that
\[
R^{\frac{p(\sigma-t)C_1t}{\sigma-p+1}}=R^{\frac{p(\sigma-t)C_1}{\sigma-p+1}\,\frac{1}{\log
R}}=e^{\frac{p(\sigma-t)C_1}{\sigma-p+1}}\leq e^{\frac{p\sigma
C_1}{\sigma-p+1}}.
\]
Thus, with the change of variable $r=e^\xi$, we deduce
\begin{align*}
  I_1&\leq Ct^\frac{p(\sigma-t)}{\sigma-p+1}\int_1^{+\infty}r^{\frac{t}{\sigma-p+1}\pa{C_0-p\sigma C_1+pC_1t+p}}(\log r)^k
        e^{-\frac{t\theta}{\sigma-p+1}(\log r)^\tau}r^{-1}\,dr\\
     &= Ct^\frac{p(\sigma-t)}{\sigma-p+1}\int_0^{+\infty}e^{\frac{t}{\sigma-p+1}\pa{C_0-p\sigma C_1+pC_1t+p}\xi}\xi^k
        e^{-\frac{t\theta}{\sigma-p+1}\xi^\tau}\,d\xi.
\end{align*}
Now recall that by our choice of $C_1$, for $t>0$ small enough, we
have $\pa{C_0-p\sigma C_1+pC_1t+p}<0$. Hence, setting
$\rho=\pa{\frac{t\theta}{\sigma-p+1}}^\frac{1}{\tau}\xi$, we have
\begin{align}
  \label{3}I_1\leq Ct^\frac{p(\sigma-t)}{\sigma-p+1}\int_0^{+\infty}\xi^ke^{-\frac{t\theta}{\sigma-p+1}\xi^\tau}\,d\xi
  =Ct^\frac{p(\sigma-t)}{\sigma-p+1}\pa{\frac{t\theta}{\sigma-p+1}}^{-\frac{k+1}{\tau}}\int_0^{+\infty}\rho^ke^{-\rho^\tau}\,d\rho
  \leq Ct^{\frac{p(\sigma-t)}{\sigma-p+1}-\frac{k+1}{\tau}}.
\end{align}
From \eqref{2}, \eqref{1} and \eqref{3} we conclude that for every
$n\in\enne$ and every small enough $t=\frac{1}{\log R}>0$ we have
\[
 \int_{B_R}{V u^{\sigma-t}}\,d\mu\leq\int_M{V u^{\sigma-t}\varphi_n^s}\,d\mu\leq
    Ct^{-\frac{(p-1)\sigma}{\sigma-p+1}}\sq{t^{\frac{p(\sigma-t)}{\sigma-p+1}-\frac{k+1}{\tau}}+n^{-\frac{t}{\sigma-p+1}}\pa{\log
    (2nR)}^k}
\]
for some fixed positive constant $C$. Passing to the limit as
$n\rightarrow+\infty$ in the previous relation yields
\begin{equation}\label{4}
  \int_{B_R}{V u^{\sigma-t}}\,d\mu\leq
  Ct^{-\frac{(p-1)\sigma}{\sigma-p+1}+\frac{p(\sigma-t)}{\sigma-p+1}-\frac{k+1}{\tau}}.
\end{equation}
Now note that by our assumptions on $\tau,k$ we have
\[
-\frac{(p-1)\sigma}{\sigma-p+1}+\frac{p(\sigma-t)}{\sigma-p+1}-\frac{k+1}{\tau}=
\frac{\sigma}{\sigma-p+1}-\frac{k+1}{\tau}-\frac{pt}{\sigma-p+1}\geq\frac{1}{2}\pa{\frac{\sigma}{\sigma-p+1}-\frac{k+1}{\tau}}:=\delta_*>0
\]
for every small enough $t=\frac{1}{\log R}>0$. Thus \eqref{4} yields
\begin{equation}\label{5}
  \int_{B_{e^{1/t}}}{V u^{\sigma-t}}\,d\mu\leq  Ct^{\delta_*}
\end{equation}
for every small enough $t>0$. Passing to the $\liminf$ as $t$ tends
to $0^+$ in \eqref{5}, we conclude by an application of Fatou's
Lemma that
\[
\int_{M}{V u^{\sigma}}\,d\mu=0,
\]
so that $u\equiv0$ on $M$. \end{proof}

\section{A problem with lower order
terms}\label{RigSec}

In this subsection we consider the semilinear equation
\begin{equation}\label{EQ_Rig1}
    \frac{1}{a(x)}\diver\pa{a(x)\nabla u}+b(x)u+V(x)u^\sigma \leq 0 \quad \text{on }\,  M.
  \end{equation}
We start with the following lemma.
\begin{lemma}\label{lemmaRig}
  Let $u \in W^{1,2}_{\text{loc}}(M)\cap L^\sigma_{\text{loc}}\pa{M, V\, d\mu_0}$ be a nonnegative weak solution
  of \eqref{EQ_Rig1}, with $a$ satisfying \eqref{EQ_propr_a}, $\sigma>1$, $V>0$ a.e. on $M$,  $V\in L^1_{\text{loc}}(M)$ and $b\in L^\frac{2m}{m+2}_{\text{loc}}(M)$.
  Assume there exists a weak solution $z>0$, $z \in
  \operatorname{Lip}_{\text{loc}}(M)$ of
    \begin{equation}\label{EQ_Rig2}
    \frac{1}{a(x)}\diver\pa{a(x)\nabla z}+b(x)z \geq 0 \quad \text{on }\, M.
  \end{equation}
Then $w:=\frac{u}{z}\in W^{1,2}_{\text{loc}}(M)\cap
L^\sigma_{\text{loc}}\pa{M, V\, d\mu_0}$ is a nonnegative weak
solution of
\begin{equation}\label{EQ_Rig3}
    \frac{1}{a(x)z^2(x)}\diver\pa{a(x)z^2(x)\nabla w}+V(x)z^{\sigma-1}(x)w^\sigma \leq 0 \quad \text{on }\, M.
  \end{equation}
\end{lemma}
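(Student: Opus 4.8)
The plan is to carry out the ground‑state substitution $u=zw$ at the level of the weak formulations. First I would record the weak formulations in weighted‑divergence form, exactly as in Remark \ref{rem1}: using $a\in\operatorname{Lip}_{\text{loc}}(M)$ and $a>0$, $u$ is a weak solution of \eqref{EQ_Rig1} if and only if
\[
-\int_M\pair{\nabla u,\nabla\psi}\,d\mu+\int_M(bu+Vu^\sigma)\psi\,d\mu\leq0
\]
for every nonnegative $\psi\in W^{1,2}(M)\cap L^\infty(M)$ with compact support, and likewise $z$ is a weak solution of \eqref{EQ_Rig2} if and only if $-\int_M\pair{\nabla z,\nabla\psi}\,d\mu+\int_M bz\psi\,d\mu\geq0$ for every such $\psi$; here $d\mu=a\,d\mu_0$. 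Since $z$ is continuous and positive, on every compact set it is bounded above and below by positive constants and $1/z\in\operatorname{Lip}_{\text{loc}}(M)$; hence $w=u/z\in W^{1,2}_{\text{loc}}(M)$, $w\geq0$, and $\int_K Vz^{\sigma-1}w^\sigma\,d\mu_0\leq C_K\int_K Vu^\sigma\,d\mu_0<\infty$, so $w\in L^\sigma_{\text{loc}}(M,V\,d\mu_0)$. Rewriting \eqref{EQ_Rig3} in weighted‑divergence form with weight $az^2$, it then suffices to prove
\[
-\int_M z^2\pair{\nabla w,\nabla\phi}\,d\mu+\int_M Vz^{\sigma+1}w^\sigma\phi\,d\mu\leq0
\]
for every nonnegative $\phi\in W^{1,2}(M)\cap L^\infty(M)$ with compact support.

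The computation hinges on the Leibniz‑rule identity $z^2\nabla w=z\nabla u-u\nabla z$ a.e.\ on $M$ (legitimate since $z,1/z\in\operatorname{Lip}_{\text{loc}}(M)$ and $u\in W^{1,2}_{\text{loc}}(M)$), which after the obvious cancellation of the terms carrying $\pair{\nabla u,\nabla z}$ gives
\[
-\pair{\nabla u,\nabla(z\phi)}+\pair{\nabla z,\nabla(u\phi)}=-\pair{z^2\nabla w,\nabla\phi}\qquad\text{a.e.\ on }M.
\]
Accordingly I would test the $u$-inequality with $\psi=z\phi$ --- admissible, since $z$ is Lipschitz and bounded on $\supp\phi$, so $z\phi\in W^{1,2}(M)\cap L^\infty(M)$ is nonnegative with compact support --- and the $z$-inequality with $\psi=u\phi$, then add the two. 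The zeroth‑order contributions $\int bu\,z\phi\,d\mu$ and $\int bz\,u\phi\,d\mu$ cancel because $u=zw$, the identity above collapses the first‑order contributions to $-\int z^2\pair{\nabla w,\nabla\phi}\,d\mu$, and what remains is $-\int z^2\pair{\nabla w,\nabla\phi}\,d\mu+\int Vu^\sigma z\phi\,d\mu\leq0$; since $Vu^\sigma z\phi=Vz^{\sigma+1}w^\sigma\phi$, this is precisely the required inequality.

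The one real obstacle is that $u\phi$ need not be an admissible test function for the $z$-inequality: a nonnegative weak subsolution of $\frac{1}{a}\diver(a\nabla u)\leq|b|u$ with $b\in L^{\frac{2m}{m+2}}_{\text{loc}}(M)$ need not be locally bounded (this exponent lies below the Moser threshold $m/2$), and moreover $u\,\nabla\phi$ need only lie in $L^{\frac{m}{m-1}}_{\text{loc}}(M)$ by Sobolev embedding, so $u\phi$ may fail to belong to $W^{1,2}(M)$. I would resolve this by truncation: set $u_k=\min\{u,k\}$, so that $u_k\in W^{1,2}_{\text{loc}}(M)$, $0\leq u_k\leq k$, $\nabla u_k=\chi_{\{u<k\}}\nabla u$, and $u_k\phi$ is a bona fide test function for the $z$-inequality. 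Running the argument above with $u_k\phi$ in place of $u\phi$ and then letting $k\to\infty$, dominated convergence applies to the two affected integrals, since $|bz\,u_k\phi|\leq|bz\,u\phi|\in L^1(M)$ and $|\pair{\nabla z,\nabla(u_k\phi)}|\leq|\nabla z|\big(|\phi|\,|\nabla u|+u\,|\nabla\phi|\big)\in L^1(M)$ --- both supported in $\supp\phi$, using $\nabla z,a\in L^\infty_{\text{loc}}(M)$, $\nabla u\in L^2_{\text{loc}}(M)$ and $u\,|\nabla\phi|\in L^{\frac{m}{m-1}}_{\text{loc}}(M)$ --- whence $\int bz\,u_k\phi\,d\mu\to\int bz\,u\phi\,d\mu$ and $\int\pair{\nabla z,\nabla(u_k\phi)}\,d\mu\to\int\pair{\nabla z,\nabla(u\phi)}\,d\mu$. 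Passing to the limit in the summed inequality yields the desired estimate; every integral occurring in the limit is finite by the integrability of $u$ and of $Vu^\sigma$ recorded above, so all manipulations are justified.
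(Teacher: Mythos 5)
Your proof is correct and follows essentially the same route as the paper's: test the $u$-inequality with $z\phi$, the $z$-inequality with $u\phi$, combine the two so that the $b$-terms cancel, and use $z^2\nabla w=z\nabla u-u\nabla z$ to collapse the gradient terms into $-\pair{z^2\nabla w,\nabla\phi}$. The only divergence is at the single technical point of admitting $u\phi$ as a test function in the $z$-inequality: the paper extends that inequality to unbounded $W^{1,2}$ test functions ``by a density argument,'' whereas you truncate $u$ at level $k$ and pass to the limit by dominated convergence --- which is, if anything, the more careful treatment, since (as you correctly observe) $u\phi$ need not belong to $W^{1,2}(M)\cap L^\infty(M)$.
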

\begin{proof}
By our assumptions, for every $\varphi\in W^{1,2}(M)\cap
L^\infty(M)$ with compact support and $\varphi\geq0$ a.e. on $M$ we
have
\begin{align}
 \label{16} & -\int_M\left\langle\nabla u,\nabla\varphi\right\rangle\,d\mu+\int_Mbu\varphi\,d\mu+\int_MVu^\sigma\varphi\,d\mu\leq0,\\
 \label{17} & -\int_M\left\langle\nabla z,\nabla\varphi\right\rangle\,d\mu+\int_Mbz\varphi\,d\mu\geq0.
\end{align}
We explicitly note that, by our assumptions, all the integrals in
\eqref{16} and \eqref{17} are finite. Moreover, by a density
argument, we easily see that inequality \eqref{17} also holds for
every $\varphi\in W^{1,2}(M)$ with compact support and
$\varphi\geq0$ a.e. on $M$, not necessarily bounded.

Now we fix $\psi\in W^{1,2}(M)\cap L^\infty(M)$ with compact support
and $\psi\geq0$ a.e. on $M$, and use $\varphi=z\psi \in
W^{1,2}(M)\cap L^\infty(M)$ as a test function in \eqref{16} and
$\varphi=u\psi \in W^{1,2}(M)$ as a test function in \eqref{17}.
Subtracting the resulting inequalities one finds
\begin{equation}\label{18}
 -\int_M\left\langle\nabla u,\nabla\psi\right\rangle z\,d\mu+\int_M\left\langle\nabla
  z,\nabla\psi\right\rangle u\,d\mu+\int_MVu^\sigma\psi z\,d\mu\leq0.
\end{equation}
Since $w=\frac{u}{z}\in W^{1,2}_{\text{loc}}(M)\cap
L^\sigma_{\text{loc}}\pa{M, V\, d\mu_0}$ with
\[
 \nabla w=\frac{1}{z}\nabla u-\frac{u}{z^2}\nabla z\qquad\text{ a.e. on }M,
\]
inequality \eqref{18} becomes
\[
 -\int_M\left\langle\nabla w,\nabla\psi\right\rangle az^2\,d\mu_0+\int_M\pa{V z^{\sigma-1}w^\sigma\psi}az^2\,d\mu_0\leq0.
\]
Then, see also Remark \ref{rem1}, $w$ is a nonnegative weak solution
of \eqref{EQ_Rig3}.
\end{proof}

Combining Lemma \ref{lemmaRig} with Theorem \ref{thm_intro3}, one
can easily obtain the following nonexistence results for nontrivial
nonnegative weak solutions of equation \eqref{EQ_Rig1}.
\begin{proposition}\label{prop1}
Assume there exists a weak solution $z>0$, $z \in
\operatorname{Lip}_{\text{loc}}(M)$ of equation \eqref{EQ_Rig2}
and let $a$ satisfy \eqref{EQ_propr_a}, $\sigma>1$, $V>0$ a.e. on
$M$, $V\in L^1_{\text{loc}}(M)$ and $b\in
L^\frac{2m}{m+2}_{\text{loc}}(M)$. Then any nonnegative weak
solution $u \in W^{1,2}_{\text{loc}}(M)\cap
L^\sigma_{\text{loc}}\pa{M, V\, d\mu_0}$ of \eqref{EQ_Rig1} is
identically null, provided one of the following conditions holds:
\begin{itemize}
    \item[i)] there exist $C_0>0$, $k\in [0, \frac{1}{\sigma-1})$ such that, for every large enough $R>0$ and every
$\eps>0$ sufficiently small,
    \begin{equation*}
      \int_{B_R} V^{-\frac{1}{\sigma-1}+\eps}az^2\,d\mu_0 \leq C R^{\frac{2\sigma}{\sigma-1}+C_0\eps}\pa{\log R}^k,\qquad \text{ or}
    \end{equation*}
    \item[ii)]   there exists $C_0>0$ such that, for every large enough $R>0$ and every $\eps>0$ sufficiently small,
    \begin{align*}
      &\int_{B_R} V^{-\frac{1}{\sigma-1}+\eps}az^2\,d\mu_0 \leq C R^{\frac{2\sigma}{\sigma-1}+C_0\eps}(\log R)^\frac{1}{\sigma-1}
      \qquad\text{ and }\\
      & \int_{B_R} V^{-\frac{1}{\sigma-1}-\eps}az^2\,d\mu_0 \leq C R^{\frac{2\sigma}{\sigma-1}+C_0\eps}(\log
      R)^\frac{1}{\sigma-1},\qquad \text{ or}
    \end{align*}
       \item[iii)] there exist $C_0\geq0$, $k\geq 0$, $\theta>0$, $\tau>\max\{\frac{\sigma-1}{\sigma}\pa{k+1},1\}$ such that, for every large enough $R>0$
       and every $\eps>0$ sufficiently small,
    \begin{equation*}
      \int_{B_{2R}\setminus B_R} V^{-\frac{1}{\sigma-1}+\eps}az^2\,d\mu_0 \leq C R^{\frac{2\sigma}{\sigma-1}+C_0\eps}\pa{\log R}^k e^{-\eps \theta \pa{\log
      R}^\tau}.
    \end{equation*}
  \end{itemize}
\end{proposition}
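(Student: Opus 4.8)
The plan is to reduce \eqref{EQ_Rig1} to an inequality of the form \eqref{EQ_gen} with $p=2$, by means of the change of unknown $w=u/z$ furnished by Lemma \ref{lemmaRig}, and then to invoke Theorem \ref{thm_intro3}.

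Since by hypothesis there exists a positive weak solution $z\in\operatorname{Lip}_{\text{loc}}(M)$ of \eqref{EQ_Rig2}, Lemma \ref{lemmaRig} applies and gives that $w:=u/z$ is a nonnegative weak solution of \eqref{EQ_Rig3}, namely
\[
\frac{1}{\hat a(x)}\diver\pa{\hat a(x)\nabla w}+\hat V(x)\,w^\sigma\leq 0\qquad\text{on }M,
\]
with $\hat a:=a z^2$ and $\hat V:=V z^{\sigma-1}$. This is precisely an inequality of type \eqref{EQ_gen} with exponent $p=2$, weight $\hat a$ and potential $\hat V$: indeed $\hat a\in\operatorname{Lip}_{\text{loc}}(M)$ with $\hat a>0$ on $M$ (as $a>0$ and $z>0$), $\hat V>0$ a.e. on $M$ with $\hat V\in L^1_{\text{loc}}(M)$ (since $z$ is locally bounded), and $\sigma>1=2-1$. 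Moreover $w\in W^{1,2}_{\text{loc}}(M)$, and since $z$ is continuous and strictly positive, $\hat V$ is comparable to $V$ on every relatively compact set, whence $w\in L^\sigma_{\text{loc}}(M,\hat V\,d\mu_0)$; thus $w$ has exactly the regularity required by Theorem \ref{thm_intro3}. Finally, once we know $w\equiv 0$ on $M$ we get $u=zw\equiv 0$, which is the desired conclusion.

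By Theorem \ref{thm_intro3} it then suffices to check that one of (HP1), (HP2), (HP3) of Definition \ref{defi_volgrowth} holds for \eqref{EQ_Rig3}. For $p=2$ the relevant exponents are $\alpha=\frac{2\sigma}{\sigma-1}$ and $\beta=\frac{1}{\sigma-1}$, and the weighted measure is $d\hat\mu=\hat a\,d\mu_0=az^2\,d\mu_0$. The key elementary observation is that $(\sigma-1)\beta=1$, so that in $\hat V^{-\beta\pm\eps}\,d\hat\mu=\pa{V z^{\sigma-1}}^{-\beta\pm\eps}\,a z^2\,d\mu_0$ the powers of $z$ produced by the factor $z^{\sigma-1}$ in the potential and by the weight $z^2$ combine in a controlled way; a direct computation then identifies the quantities $\int_{B_R\setminus B_{R/2}}\hat V^{-\beta\pm\eps}\,d\hat\mu$ with the integrals $\int_{\cdot}V^{-\beta\pm\eps}\,a z^2\,d\mu_0$ controlled in i), ii), iii). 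One uses here that in i) and ii) the bound is imposed over the whole ball $B_R$, which a fortiori yields the annular bound needed in (HP1), (HP2), and that in iii) the annulus $B_{2R}\setminus B_R$ coincides with $B_R\setminus B_{R/2}$ up to relabelling $R$. Hence conditions i), ii), iii) are, respectively, conditions (HP1), (HP2), (HP3) for \eqref{EQ_Rig3}, and Theorem \ref{thm_intro3} yields $w\equiv 0$, so that $u\equiv 0$ on $M$.

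Since both Lemma \ref{lemmaRig} and Theorem \ref{thm_intro3} are already available, there is no serious obstacle left in the proof of the Proposition itself: the only point requiring a little care is the bookkeeping translation of the volume-growth hypotheses just described, which rests on the identity $(\sigma-1)\beta=1$. The substantial PDE content is, of course, absorbed into Lemma \ref{lemmaRig} — namely the verification that $z\psi$ and $u\psi$ are admissible test functions in the weak formulations of \eqref{EQ_Rig1} and \eqref{EQ_Rig2} and that all the integrals arising when one subtracts those formulations are finite, which is precisely where the hypothesis $b\in L^{\frac{2m}{m+2}}_{\text{loc}}(M)$ is used, through the Sobolev embedding $W^{1,2}_{\text{loc}}(M)\hookrightarrow L^{\frac{2m}{m-2}}_{\text{loc}}(M)$.
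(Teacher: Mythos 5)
Your overall strategy is exactly the one the paper intends: the paper offers no proof of Proposition \ref{prop1} beyond the sentence ``Combining Lemma \ref{lemmaRig} with Theorem \ref{thm_intro3}, one can easily obtain\ldots'', and your reduction of \eqref{EQ_Rig1} to \eqref{EQ_Rig3} via $w=u/z$, followed by an application of Theorem \ref{thm_intro3} with $p=2$, weight $\hat a=az^2$ and potential $\hat V=Vz^{\sigma-1}$, is that route. The regularity checks on $\hat a$, $\hat V$ and $w$, the passage from balls to annuli in i) and ii), and the relabelling of the annulus in iii) (which costs only a harmless shrinking of $\theta$, since $(\log 2R)^\tau-(\log R)^\tau=o\bigl((\log 2R)^\tau\bigr)$) are all fine.

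The gap is in the step you describe as ``a direct computation identifies the quantities $\int \hat V^{-\beta\pm\eps}\,d\hat\mu$ with the integrals $\int V^{-\beta\pm\eps}az^2\,d\mu_0$''. That identification is false. With $p=2$ one has $\beta=\frac{1}{\sigma-1}$, hence $(\sigma-1)\beta=1$ and
\[
\hat V^{-\beta\pm\eps}\,\hat a=V^{-\beta\pm\eps}\,z^{(\sigma-1)(-\beta\pm\eps)}\,a z^{2}=V^{-\beta\pm\eps}\,a\,z^{1\pm(\sigma-1)\eps},
\]
so conditions (HP1)--(HP3) for \eqref{EQ_Rig3} involve the weight $a\,z^{1\pm(\sigma-1)\eps}$, not $az^2$; the two agree only for $\eps=\beta$ (which is not ``small'') or for constant $z$. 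Consequently the hypotheses i)--iii) as written do not, by themselves, yield (HP1)--(HP3) for the transformed inequality: you need an additional argument relating $\int V^{-\beta\pm\eps}a z^{1\pm(\sigma-1)\eps}\,d\mu_0$ to $\int V^{-\beta\pm\eps}a z^{2}\,d\mu_0$, and this requires some global control on $z$ (for instance $\inf_M z>0$, under which $z^{1\pm(\sigma-1)\eps}\leq C\,z^{2}$ uniformly for $\eps$ small; mere positivity and local Lipschitz continuity of $z$ give only local lower bounds and do not suffice). Either add such a hypothesis together with the corresponding estimate, or carry out the reduction with the weight $a\,z^{1\pm(\sigma-1)\eps}$ that the computation actually produces and check the volume-growth conditions in that form.
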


We now proceed to describe a general setting where one can indeed
produce the desired auxiliary function $z$, in the particular case
when $a\equiv1$ on $M$.

Let us fix a point $o\in M$ and denote by $\textrm{Cut}(o)$ the
{\it cut locus} of $o$. For any $x\in M\setminus
\big[\textrm{Cut}(o)\cup \{o\} \big]$, one can define the {\it
polar coordinates} with respect to $o$, see e.g. \cite{Grig}.
Namely, for any point $x\in M\setminus \big[\textrm{Cut}(o)\cup
\{o\} \big]$ there correspond a polar radius $r(x) := dist(x, o)$
and a polar angle $\theta\in \mathbb S^{m-1}$ such that the
shortest geodesics from $o$ to $x$ starts at $o$ with the
direction $\theta$ in the tangent space $T_oM$. Since we can
identify $T_o M$ with $\mathbb R^m$, $\theta$ can be regarded as a
point of $\mathbb S^{m-1}.$

The Riemannian metric in $M\setminus\big[\textrm{Cut}(o)\cup \{o\}
\big]$ in the polar coordinates reads
\[ds^2 = dr^2+A_{ij}(r, \theta)d\theta^i d\theta^j, \]
where $(\theta^1, \ldots, \theta^{m-1})$ are coordinates in
$\mathbb S^{m-1}$ and $(A_{ij})$ is a positive definite matrix. It
is not difficult to see that the Laplace-Beltrami operator in
polar coordinates has the form
\begin{equation}\label{e70}
\Delta = \frac{\partial^2}{\partial r^2} + \mathcal F(r,
\theta)\frac{\partial}{\partial r}+\Delta_{S_{r}},
\end{equation}
where $\mathcal F(r, \theta):=\frac{\partial}{\partial
r}\big(\log\sqrt{A(r,\theta)}\big)$, $A(r,\theta):=\det
(A_{ij}(r,\theta))$, $\Delta_{S_r}$ is the Laplace-Beltrami
operator on the submanifold $S_{r}:=\partial B(o, r)\setminus
\textrm{Cut}(o)$\,.

$M$ is a {\it manifold with a pole}, if it has a point $o\in M$
with $\textrm{Cut}(o)=\emptyset$. The point $o$ is called {\it
pole} and the polar coordinates $(r,\theta)$ are defined in
$M\setminus\{o\}$.

A manifold with a pole is a {\it spherically symmetric manifold} or
a {\it model}, if the Riemannian metric is given by
\begin{equation}\label{e70b}
ds^2 = dr^2+\psi^2(r)d\theta^2,
\end{equation}
where $d\theta^2$ is the standard metric in $\mathbb S^{m-1}$, and
\begin{equation}\label{26}
\psi\in \mathcal A:=\Big\{f\in C^\infty((0,\infty))\cap
C^1([0,\infty)): f'(0)=1,\, f(0)=0,\, f>0\text{ in }
(0,\infty)\Big\}.
\end{equation}
In this case, we write $M\equiv M_\psi$; furthermore, we have
$\sqrt{A(r,\theta)}=\psi^{m-1}(r)$, so the boundary area of the
geodesic sphere $\partial S_R$ is computed by
\[S(R)=\omega_m\psi^{m-1}(R),\]
$\omega_m$ being the area of the unit sphere in $\mathbb R^m$.
Also, the volume of the ball $B_R(o)$ is given by
\[\mu(B_R(o))=\int_0^R S(\xi)d\xi\,. \]
Moreover we have
\[\Delta = \frac{\partial^2}{\partial r^2}+ (n-1)\frac{\psi'}{\psi}\frac{\partial}{\partial r}+ \frac1{\psi^2}\Delta_{\mathbb S^{m-1}},\]
or equivalently
\[\Delta = \frac{\partial^2}{\partial r^2}+ \frac{S'}{S}\frac{\partial}{\partial r}+ \frac1{\psi^2}\Delta_{\mathbb S^{m-1}},\]
where $\Delta_{\mathbb S^{m-1}}$ is the Laplace-Beltrami operator in
$\mathbb S^{m-1}$.

Observe that for $\psi(r)=r$, $M=\mathbb R^m$, while for
$\psi(r)=\sinh r$, $M$ is the $m-$dimensional hyperbolic space
$\mathbb H^m$.

Let us recall some useful comparison results for sectional and
Ricci curvatures, that will be used in the sequel. For any $x\in
M\setminus\big[\textrm{Cut}(o)\cup\{o\} \big]$, denote by
$Ric_o(x)$ the {\it Ricci curvature} at $x$ in the direction
$\frac{\partial}{\partial r}$. Let $\omega$ denote any pair of
tangent vectors from $T_xM$ having the form
$\left(\frac{\partial}{\partial r} ,X\right)$, where $X$ is a unit
vector orthogonal to $\frac{\partial}{\partial r}$. Denote by
$K_{\omega}(x)$ the {\it sectional curvature} at the point $x\in
M$ of the $2$-section determined by $\omega$. If $M\equiv M_\psi$
is a model manifold, then for any $x=(r, \theta)\in
M\setminus\{o\}$
\[K_{\omega}(x)=-\frac{\psi''(r)}{\psi(r)},\]
and
\[Ric_{o}(x)=-(m-1)\frac{\psi''(r)}{\psi(r)}.\]

Observe moreover that (see \cite{Ichi1}, \cite{Ichi2}, \cite[Section
15]{Grig}), if $M$ is a manifold with a pole $o$ and
\begin{equation}\label{22}
K_{\omega}(x)\leq -\frac{\psi''(r)}{\psi(r)}\quad \textrm{for
all}\;\; x=(r,\theta)\in M\setminus\{o\},
\end{equation}
for some function $\psi\in \mathcal A$, then
\begin{equation}\label{e71}
\mathcal F(r, \theta)\geq (m-1)\frac{\psi'(r)}{\psi(r)}\quad
\textrm{for all}\;\; r>0,\, \theta \in \mathbb S^{m-1}\,.
\end{equation}

On the other hand, if $M$ is a manifold with a pole $o$ and
\begin{equation}\label{24}
Ric_{o}(x)\geq -(m-1)\frac{\psi''(r)}{\psi(r)}\quad \textrm{for
all}\;\; x=(r,\theta)\in M\setminus\{o\},
\end{equation}
for some function $\psi\in \mathcal A$, then
\begin{equation}\label{e71b}
\mathcal F(r, \theta)\leq (m-1)\frac{\psi'(r)}{\psi(r)}\quad
\textrm{for all}\;\; r>0, \theta \in \mathbb S^{m-1}\,.
\end{equation}

We have the following
\begin{lemma}\label{leR}
Let $M$ be a manifold with a pole $o$ and $b\in
L^\frac{2m}{m+2}_{\text{loc}}(M)$. Let
$b_0:\erre^+\rightarrow\erre$ be such that
\begin{equation}\label{23}
b(r,\theta)\geq b_0(r)\quad \textrm{for all}\;\; x=(r,\theta)\in
M\setminus\{o\}.
\end{equation}
Assume that $\psi\in \mathcal{A}$, that
$\zeta:\erre^+\rightarrow\erre$ is a positive weak solution in
$\operatorname{Lip}_{\text{loc}}(\erre^+)$ of
\begin{equation}\label{25}
\pa{\psi^{m-1}\zeta'}'+b_0\psi^{m-1}\zeta\geq0\qquad\textrm{ in
}\erre^+,
\end{equation}
and that either
\begin{itemize}
\item[(A)] $\psi$ satisfies condition \eqref{22} and $\zeta$ is
nondecreasing,
\end{itemize}
or
\begin{itemize}
\item[(B)] $\psi$ satisfies condition \eqref{24} and $\zeta$ is
nonincreasing.
\end{itemize}
Then $z(x):=\zeta(r(x))\in\operatorname{Lip}_{\text{loc}}(M)$ is a
positive weak solution of \eqref{EQ_Rig2}, with $a\equiv1$ on $M$.
\end{lemma}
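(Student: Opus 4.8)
The plan is to exploit that $z=\zeta\circ r$ is a radial function, so that away from the pole $o$ the inequality $\Delta z+bz\geq0$ reduces to the one--dimensional differential inequality \eqref{25} satisfied by $\zeta$. Indeed, using the polar expression \eqref{e70} of the Laplace--Beltrami operator and the fact that $z$ depends only on $r$, one has, formally, $\Delta z=\zeta''(r)+\mathcal F(r,\theta)\,\zeta'(r)$, while \eqref{25} gives $\zeta''+(m-1)\tfrac{\psi'}{\psi}\zeta'+b_0\zeta\geq0$; hence $\Delta z+bz=\big[\zeta''+(m-1)\tfrac{\psi'}{\psi}\zeta'+b_0\zeta\big]+\big(\mathcal F-(m-1)\tfrac{\psi'}{\psi}\big)\zeta'+(b-b_0)\zeta$. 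The term $(b-b_0)\zeta$ is nonnegative by \eqref{23} and $\zeta>0$, and the term $\big(\mathcal F-(m-1)\tfrac{\psi'}{\psi}\big)\zeta'$ is nonnegative a.e. in either case: in case (A), \eqref{e71} gives $\mathcal F-(m-1)\tfrac{\psi'}{\psi}\geq0$ and $\zeta$ nondecreasing gives $\zeta'\geq0$; in case (B), \eqref{e71b} gives $\mathcal F-(m-1)\tfrac{\psi'}{\psi}\leq0$ and $\zeta$ nonincreasing gives $\zeta'\leq0$. I first record that $z\in\operatorname{Lip}_{\text{loc}}(M)\cap W^{1,2}_{\text{loc}}(M)$ and $z>0$: since $\textrm{Cut}(o)=\emptyset$, $r$ is smooth on $M\setminus\{o\}$ and Lipschitz in a neighbourhood of $o$, and $\zeta$ is positive and locally Lipschitz; moreover in polar coordinates $\nabla z=\zeta'(r)\,\partial_r$ a.e., so $\abs{\nabla z}=\abs{\zeta'(r)}$ is locally bounded.

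To make this rigorous I would first take a test function $\varphi\in C^\infty_c(M\setminus\{o\})$ with $\varphi\geq0$. Since $ds^2=dr^2+A_{ij}\,d\theta^id\theta^j$, we have $d\mu_0=\sqrt{A(r,\theta)}\,dr\,d\theta$ and $\pair{\nabla z,\nabla\varphi}=\zeta'(r)\,\partial_r\varphi$, so that by Fubini
\[
-\int_M\pair{\nabla z,\nabla\varphi}\,d\mu_0+\int_M b\,z\,\varphi\,d\mu_0=\int_{\mathbb S^{m-1}}\!\Big(\int_0^{\infty}\big(-\zeta'\,\partial_r\varphi+b\,\zeta\,\varphi\big)\sqrt{A}\,dr\Big)\,d\theta.
\]
By \eqref{23} and positivity, it suffices to prove that the right--hand side is nonnegative with $b$ replaced by $b_0$. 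For each fixed $\theta$ I would use the weak form of \eqref{25} with the nonnegative test function $\phi_\theta(r):=\varphi(r,\theta)\,\psi^{1-m}(r)\,\sqrt{A(r,\theta)}\in C^\infty_c((0,\infty))$; expanding $\psi^{m-1}\zeta'\phi_\theta'=\zeta'\,\partial_r\varphi\,\sqrt{A}+\psi^{m-1}\,\zeta'\,\varphi\,\partial_r\!\big(\psi^{1-m}\sqrt{A}\big)$ and $b_0\,\psi^{m-1}\zeta\,\phi_\theta=b_0\,\zeta\,\varphi\,\sqrt{A}$, and integrating the resulting inequality over $\mathbb S^{m-1}$, one obtains
\[
\int_{\mathbb S^{m-1}}\!\int_0^{\infty}\big(-\zeta'\,\partial_r\varphi+b_0\,\zeta\,\varphi\big)\sqrt{A}\,dr\,d\theta\;\geq\;\int_{\mathbb S^{m-1}}\!\int_0^{\infty}\psi^{m-1}\,\varphi\,\zeta'\,\partial_r\!\big(\psi^{1-m}\sqrt{A}\big)\,dr\,d\theta.
\]
Since $\partial_r\big(\psi^{1-m}\sqrt{A}\big)=\psi^{1-m}\sqrt{A}\,\big(\mathcal F-(m-1)\tfrac{\psi'}{\psi}\big)$, the sign discussion above shows that the right--hand side is $\geq0$ in both cases (A) and (B); hence the left--hand side, and therefore also the original expression, is nonnegative for every such $\varphi$. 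A standard density argument (using $\nabla z\in L^2_{\text{loc}}(M)$ and $b\in L^{2m/(m+2)}_{\text{loc}}(M)\subset L^1_{\text{loc}}(M)$, so that $b\,z\,\varphi\in L^1(M)$) extends the inequality to all nonnegative $\varphi\in W^{1,2}(M)\cap L^\infty(M)$ with compact support contained in $M\setminus\{o\}$.

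It remains to remove the pole. Given a nonnegative $\varphi\in W^{1,2}(M)\cap L^\infty(M)$ with compact support, I would pick Lipschitz cutoffs $\chi_\eps$ with $0\leq\chi_\eps\leq1$, $\chi_\eps\equiv0$ near $o$, $\chi_\eps\equiv1$ outside $B_{2\eps}(o)$, and $\norm{\nabla\chi_\eps}_{L^2(M)}\to0$ as $\eps\to0^+$, which is possible since a point has vanishing $2$-capacity when $m\geq2$. Applying the inequality already established to $\varphi\,\chi_\eps$ and letting $\eps\to0^+$, the term $\int_M\pair{\nabla z,\varphi\,\nabla\chi_\eps}\,d\mu_0$ vanishes in the limit because $\big|\int_M\pair{\nabla z,\varphi\,\nabla\chi_\eps}\,d\mu_0\big|\leq\norm{\varphi}_{\infty}\norm{\nabla z}_{L^2(B_{2\eps}(o))}\norm{\nabla\chi_\eps}_{L^2(M)}$, while $\int_M\pair{\nabla z,\chi_\eps\,\nabla\varphi}\,d\mu_0\to\int_M\pair{\nabla z,\nabla\varphi}\,d\mu_0$ and $\int_M b\,z\,\varphi\,\chi_\eps\,d\mu_0\to\int_M b\,z\,\varphi\,d\mu_0$ by dominated convergence. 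This gives $-\int_M\pair{\nabla z,\nabla\varphi}\,d\mu_0+\int_M b\,z\,\varphi\,d\mu_0\geq0$ for every admissible test function, i.e. $z$ is a positive weak solution of \eqref{EQ_Rig2} with $a\equiv1$. I expect the main work to be in this passage from the heuristic identity to the weak formulation --- namely the polar--coordinate integration by parts carried out through the one--dimensional test functions $\phi_\theta$, and the removable--singularity argument at $o$ --- rather than in any conceptually new step.
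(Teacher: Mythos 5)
Your argument is correct and follows exactly the route the paper takes (the paper's own proof is a two-line sketch invoking \eqref{e70}, \eqref{e71}/\eqref{e71b}, the monotonicity of $\zeta$ and \eqref{23}); you have simply filled in the weak-formulation details via the polar-coordinate test functions $\phi_\theta$ and the capacity argument at the pole. No gaps.
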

\begin{proof}
In case condition $(A)$ holds, the result is an easy consequence of
 \eqref{e70}, \eqref{e71}, the monotonicity of $\zeta$ and
condition \eqref{23}. Similarly, when condition $(B)$ holds, the
result follows immediately as before, using \eqref{e71b} in place of
\eqref{e71}.
\end{proof}
We refer the interested reader to the stimulating paper of
Bianchini, Mari, Rigoli \cite{BMR} for results concerning the
existence of a positive solution of \eqref{25} and its precise
asymptotic behavior as $r$ tends to $+\infty$. These combined with
Lemma \ref{leR} and Proposition \ref{prop1} give somehow explicit
nonexistence results for equation \eqref{EQ_Rig1}.

\section{Counterexamples}\label{seclast}
In this section, we will produce three counterexamples to the
previous nonexistence results, all in the particular case of
equation \eqref{EQ_lapl}. Here we follow a similar approach as one
finds in \cite{Grig} and \cite{GrigS}. In the sequel,
$\alpha=\frac{2\sigma}{\sigma-1}$ and $\beta=\frac{1}{\sigma-1}$
as in Definition \ref{defi_volgrowth} with $p=2$, while $M$ will
always denote a model manifold with a pole $o$ and metric given by
\eqref{e70}. Set $B_R\equiv B_R(o)$ and $r\equiv r(x)=dist(x,o)$
for any $x\in M$.

Let $spec(-\Delta)$ be the spectrum in $L^2(M)$ of the operator
$-\Delta$. Note that (see \cite[Section 10]{Grig})
\[spec(-\Delta)\subseteq [0,\infty)\,.\]
Denote by $\bar\lambda(M)$ the bottom of $spec(-\Delta)$, that is
\[\bar\lambda(M):=\inf spec(-\Delta)\,.\]
By \cite{Br}, for each fixed $x\in M$, there holds
\begin{equation}\label{e20}
\bar\lambda(M)\leq \frac 1
4\left[\limsup_{R\rightarrow+\infty}\frac{\log\mu(B_R(x))}{R}\right]^2\,.
\end{equation}
For any $\rho>0$, let $\lambda_\rho$ be  the first Dirichlet eigenvalue of the Laplace operator in $B_\rho$, that is the smallest number $\lambda_\rho$ for which the problem
\begin{equation}\label{e21}
\begin{cases}
\Delta u + \lambda u \,=\, 0  &  \,\,  \textrm{in} \  B_\rho  \, , \\
 u\,=\,0  & \ \textrm{on} \  \partial B_\rho  \, . \\
\end{cases}
\end{equation}
has a non-zero solution. Indeed, $\lambda_\rho$ coincides with the
bottom of the spectrum of the operator $-\Delta$ in $L^2(B_\rho)$
with domain $C^\infty_0(B_\rho)$. It is easily checked, see e.g.
****ref uberbook***, that $\lambda_\rho\geq 0;$ moreover,
$\lambda_{\rho_1}\geq \lambda_{\rho_2}$ if $\rho_1<\rho_2$, and
$\lambda_\rho\to \bar\lambda(M)$ as $\rho\to\infty.$

\medskip

In the sequel we shall make use of the following result (see
\cite{GrigS}).
\begin{proposition}\label{propGS}
Let $\sigma>1$, $r_0>0$, $A\in C^1((r_0, \infty))$ with $A>0$ and
$\int_{r_0}^\infty\frac{dr}{A(r)}<\infty.$ Let $B\in C((r_0,
\infty))$ be such that
\[\int_{r_0}^\infty [\gamma(r)]^\sigma |B(r)|dr<\infty,\]
where
\[\gamma(r):=\int_{r}^\infty  \frac{d\xi}{A(\xi)}\qquad\text{ for }r\geq r_0. \]
 Then the equation
 \[\pa{A(r) y'}' + B(r) y^\sigma\,=\,0\qquad\text{ for }r>R_0,\]
for $R_0>r_0$ sufficiently large, admits a positive solution $y(r)$
such that $y(r)\sim \gamma(r)$ as $r\to \infty$.
\end{proposition}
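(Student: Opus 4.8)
The plan is to recast the existence statement as a fixed point problem for an integral operator, on a complete metric space of functions that are comparable to $\gamma$ near infinity, and to close it by the Banach contraction principle.

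First I would record the elementary properties of the weight $\gamma$. By the assumption $\int_{r_0}^\infty \frac{dr}{A(r)}<\infty$, the function $\gamma$ is finite, positive and strictly decreasing on $[r_0,\infty)$ with $\gamma(r)\to 0$ as $r\to\infty$, and it satisfies $A(r)\gamma'(r)=-1$, hence $\pa{A\gamma'}'\equiv 0$; thus $\gamma$ is exactly the decaying solution of the homogeneous equation. I would then look for a solution of $\pa{A y'}'+B y^\sigma=0$ obeying the conditions at infinity $y(r)\to 0$ and $A(r)y'(r)\to -1$ (the ones satisfied by $\gamma$ itself). Integrating the equation twice and imposing these conditions --- the manipulation being legitimate a posteriori, once the relevant integrals are known to converge --- and then using Fubini's theorem to exchange the order of integration, one is led to the integral equation
\[
 y(r)=(Ty)(r):=\gamma(r)-\int_r^\infty\pa{\gamma(r)-\gamma(\xi)}B(\xi)\sq{y(\xi)}^\sigma\,d\xi .
\]
Conversely, any continuous positive solution of $y=Ty$ on $[R_0,\infty)$ is, by differentiating and a short bootstrap, a genuine solution of the ODE with $Ay'\in C^1$, and it inherits the desired asymptotics.

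Then I would fix $R_0>r_0$ so large that $\int_{R_0}^\infty \abs{B(\xi)}\sq{\gamma(\xi)}^\sigma\,d\xi$ is as small as needed (possible, since this is the tail of a convergent integral), and work in the set
\[
 X=\set{\, y\in C([R_0,\infty)) \tc \tfrac12\,\gamma(r)\le y(r)\le \tfrac32\,\gamma(r)\ \text{ for every }r\ge R_0 \,},
\]
which is a complete metric space for the weighted distance $d(y_1,y_2):=\sup_{r\ge R_0}\frac{\abs{y_1(r)-y_2(r)}}{\gamma(r)}$. Two estimates are then needed, both routine. Since $0\le\gamma(r)-\gamma(\xi)\le\gamma(r)$ for $\xi\ge r$, for $y\in X$ one gets $\abs{(Ty)(r)-\gamma(r)}\le 2^\sigma\,\gamma(r)\int_{R_0}^\infty\abs{B}\,\gamma^\sigma$, so that $T$ maps $X$ into itself once $R_0$ is large; and by the mean value theorem applied to $t\mapsto t^\sigma$ on $[0,\tfrac32\gamma(\xi)]$, together with $\abs{y_1(\xi)-y_2(\xi)}\le d(y_1,y_2)\,\gamma(\xi)$, one obtains $\abs{(Ty_1)(r)-(Ty_2)(r)}\le \sigma\,2^{\sigma-1}\gamma(r)\pa{\int_{R_0}^\infty\abs{B}\,\gamma^\sigma}d(y_1,y_2)$, so that $T$ is a contraction on $X$ provided $R_0$ is large. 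Banach's fixed point theorem then produces a solution $y\in X$, automatically positive since $y\ge\tfrac12\gamma>0$, and the bound $\abs{\frac{y(r)}{\gamma(r)}-1}\le 2^\sigma\int_r^\infty\abs{B}\,\gamma^\sigma\to 0$ gives $y(r)\sim\gamma(r)$ as $r\to\infty$; a final differentiation of $y=Ty$ verifies that $\pa{A y'}'+B y^\sigma=0$ on $(R_0,\infty)$.

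The delicate point is the choice of the functional framework: the nonlinear term $y^\sigma$ makes the plain sup norm useless, and one is forced to measure the unknown against the weight $\gamma$; it is then precisely the hypothesis $\int_{r_0}^\infty\sq{\gamma}^\sigma\abs{B}<\infty$ that, after restricting to a far-out half-line $[R_0,\infty)$, makes both the invariance of $X$ under $T$ and the contraction constant available. Everything else --- the derivation of the integral equation and the bootstrap from an integral to a classical solution --- is standard.
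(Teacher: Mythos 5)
The paper offers no proof of this proposition --- it is imported verbatim from \cite{GrigS} --- so the only question is whether your argument stands on its own, and it does. The integral equation you derive is the correct one (the double integration plus Fubini gives exactly $y(r)=\gamma(r)-\int_r^\infty(\gamma(r)-\gamma(\xi))B(\xi)y(\xi)^\sigma\,d\xi$, using $A\gamma'=-1$), the operator is well defined on $X$ because $0\le\gamma(r)-\gamma(\xi)\le\gamma(r)$ and $y^\sigma\le(3/2)^\sigma\gamma^\sigma$ there, the self-map and contraction constants are controlled by the tail $\int_{R_0}^\infty\abs{B}\gamma^\sigma$ exactly as you say (note $\sigma>1$ makes $t\mapsto t^\sigma$ of class $C^1$ at $0$, so the mean value step is legitimate), and differentiating the fixed-point identity gives $Ay'=-\bigl(1-\int_r^\infty By^\sigma\,d\xi\bigr)\in C^1$, whence $(Ay')'=-By^\sigma$. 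This contraction-mapping scheme in the $\gamma$-weighted sup norm is essentially the argument of Grigor'yan--Sun, so nothing is gained or lost relative to the source.
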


\begin{exe}\label{exe1}
Let $\psi\in \mathcal A$, see \eqref{26}, with
\[\psi(r):=\begin{cases}
r  &  \,\,  \textrm{if} \  0\leq r<1  \, , \\
 [r^{\alpha-1}(\log r)^{\beta_0} ]^{\frac 1{m-1}} & \ \textrm{if} \  r > 2 \,; \\
\end{cases}
\]
here $\beta_0>\beta$. Let $0<\delta < \beta_0 -\beta$ and define
\[V(x)\equiv V(r):= (\log(2+r))^{\frac{\delta}{\beta}} \quad \textrm{for all}\;\; x\in M\,.\]
For any $R>0$ sufficiently large we have
\[S(R)= \omega_m R^{\alpha-1}(\log R)^{\beta_0},\quad \mu(B_R)\simeq C R^\alpha (\log R)^{\beta_0}\,;\]
thus, thanks to \eqref{e20}, we have $\bar\lambda(M)=0.$ Moreover,
there holds
\begin{equation}\label{e1}
\int_{B_R} V^{-\beta}(x)\, d\mu \geq C R^\alpha (\log R)^{\beta_0
-\delta},
\end{equation}
with $\beta_0-\delta>\beta$. Hence, in view of \eqref{e1}, neither
condition \eqref{EQ_hp1} nor condition \eqref{EQ_hp2} is satisfied.
Furthermore, observe that \eqref{e32} holds true, while \eqref{e33}
fails. This is essentially due to the choice of $\psi$.

Note that for any $r_0>0$,
\begin{equation}\label{e3}
\int_{r_0}^\infty\frac{d\xi}{S(\xi)}<\infty\,;
\end{equation}
moreover, for $r> 0$ sufficiently large,
\[\gamma(r) := \int_r^\infty \frac{d\xi}{S(\xi)} \simeq \frac C{r^{\alpha-2}(\log r)^{\beta_0}} \,.\]
Hence for $r_0>0$ large enough
\begin{align}
\label{e2}\int_{r_0}^\infty[\gamma(r)]^\sigma V(r) S(r)\, dr &\leq C
\int_{r_0}^\infty \frac{[\log(2+r)]^{\delta/\beta}r^{\alpha-1}
(\log r)^{\beta_0}}{r^{\sigma(\alpha-2)}(\log r)^{\beta_0 \sigma}}\,dr\\
\nonumber&\leq C \int_{r_0}^\infty (\log
r)^{\beta_0(1-\sigma)+\frac{\delta}{\beta}}\,\frac{dr}{r}<\infty\,.
\end{align}
In view of \eqref{e3}-\eqref{e2}, from Proposition \ref{propGS} with
$A(r)=S(r)$ and $B(r)=S(r)V(r)$, we have that there exists a
solution $y=y(r)$ of
\begin{equation}\label{e7}
y''(r) + \frac{S'(r)}{S(r)} y'(r) + V(r) [y(r)]^\sigma = 0 ,\quad r>R_0,
 \end{equation}
for some $R_0>r_0$. Furthermore, $y(r)>0$ for all $r\in [R_0,
\infty)$ and $y(r) \sim \gamma(r)$ as $r\to \infty$.

Now for any $\rho>0$, let $v_\rho$ be the solution to the eigenvalue
problem \eqref{e21} with $\lambda=\lambda_\rho$, which we can assume
is normalized by setting $v_\rho(o)=1$. Thus we have that
$v_\rho\equiv v_\rho(r)$, that $0<v_\rho(r)\leq1$ and that
$v_\rho(r)$ is decreasing for $r\in[0,\rho]$. For any $\rho
>R_0$, define $\displaystyle m:=\inf_{[R_0,
\rho)}\frac{y(r)}{v_\rho(r)}$ and for any fixed $\xi\in(R_0,\rho)$
let
\[\tilde u (x):= \begin{cases}
m v_\rho(r) &  \,\,  \textrm{in} \  B_\xi, \\
y(r) & \ \textrm{on} \  \partial B_\xi^c. \\
\end{cases}
\]
Since $\bar\lambda(M)=0$, as in \cite{GrigS} one can prove that for
some $\xi\in (R_0, \rho)$ we have $\tilde u \in C^1(M)$, and thus
$\tilde u\in W^1_{\text{loc}}(M)$. Moreover
\begin{align*}
\Delta (m v_\rho) + \frac{\lambda_\rho}{m^{\sigma-1}}(m
v_\rho)^\sigma&= 0 \qquad \textrm{ in } B_\rho,\\
\Delta y + V y^\sigma &= 0 \qquad \textrm{ in } B_{R_0}^c.
\end{align*}
Define $M_\rho=\max_{\overline B_\rho}V$ and
 \[\delta=\min\set{1,m^{-1}\lambda_\rho^\frac{1}{\sigma-1}M_\rho^{-\frac{1}{\sigma-1}}};\]
then, since $V>0$ on $M$, $\delta>0$ and on $B_\rho$ we have
\begin{align*}
\Delta (\delta m v_\rho) + V(\delta m v_\rho)^\sigma&\leq \Delta
(\delta m v_\rho) + M_\rho(\delta m v_\rho)^\sigma\\
 &\leq\Delta (\delta m v_\rho) + \frac{\lambda_\rho}{(\delta m)^{\sigma-1}}(\delta m
 v_\rho)^\sigma=0.
\end{align*}
On the other hand on $B_{R_0}^c$ we have
\begin{align*}
\Delta (\delta y) + V(\delta y)^\sigma&\leq \delta\Delta y + \delta
Vy^\sigma=0.
\end{align*}
Thus we see that the function $u=\delta \tilde u$ is positive and
satisfies
\[ \Delta u+Vu^\sigma\leq0\qquad \text{ on }M.\]
\end{exe}

\begin{exe}
Let $\psi\in \mathcal A$ with
\[\psi(r):=\begin{cases}
r  &  \,\,  \textrm{if} \  0\leq r<1  \, , \\
 [r^{\alpha-1}(\log r)^{\beta} ]^{\frac 1{m-1}} & \ \textrm{if} \  r > 2 \, . \\
\end{cases}
\]
Let $\delta>0$ and define
\[V(x)\equiv V(r):= (\log(2+r))^{-\frac{\delta}{\beta}} \quad \textrm{for all}\;\; x\in M\,.\]
For any $R>0$ sufficiently large we have
\[S(R)= \omega_m R^{\alpha-1}(\log R)^{\beta},\quad \mu(B_R)\simeq C R^\alpha (\log R)^{\beta}\,,\]
thus, thanks to \eqref{e20}, we conclude taht $\lambda_1(M)=0$.
Moreover, there holds
\begin{equation}\label{e4}
\int_{B_R} V^{-\beta}(x)\, d\mu \geq C R^\alpha (\log R)^{\beta
+\delta}\,.
\end{equation}
Observe that in view of \eqref{e4}, neither condition \eqref{EQ_hp1}
nor condition \eqref{EQ_hp2} is satisfied. Moreover, note that
\eqref{e32} holds, while \eqref{e33} fails. This is essentially due
to the choice of $V$.

For any $r_0>0$ we have
\begin{equation}\label{e5}
\int_{r_0}^\infty\frac{d\xi}{S(\xi)}<\infty\,;
\end{equation}
moreover, for $r> 0$ large enough,
\[\gamma(r) := \int_r^\infty \frac{d\xi}{S(\xi)} \simeq \frac C{r^{\alpha-2}(\log r)^{\beta}} \,.\]
Hence for $r_0>0$ large enough
\begin{align}
\label{e6}\int_{r_0}^\infty[\gamma(r)]^\sigma V(r) S(r)\, dr &\leq C \int_{r_0}^\infty \frac{[\log(2+r)]^{-\delta/\beta}r^{\alpha-1}(\log r)^{\beta}}{r^{\sigma(\alpha-2)}(\log r)^{\beta \sigma}}\,dr\\
\nonumber&\leq C \int_{r_0}^\infty \frac 1{(\log
r)^{\beta(\sigma-1)+\frac{\delta}{\beta}}}\, \frac{dr}{r}<\infty.
\end{align}
In view of \eqref{e5}-\eqref{e6}, from Proposition \ref{propGS} with
$A(r)=S(r)$ and $B(r)=S(r)V(r)$, we have that there exists a
solution $y=y(r)$ of \eqref{e7}, for some $R_0>0$. Furthermore,
$y(r)>0$ for all $r\in [R_0, \infty)$ and $y(r) \sim \gamma(r)$ as
$r\to \infty$. Since $\lambda_1(M)=0$ and $V>0$ on $M$, by the same
arguments as in the previous Example \ref{exe1}, we can construct
$u\in C^1(M)$, with $u=u(r)>0$ on $M$, which satisfies
\[\Delta u + V
u^\sigma \leq 0 \quad \textrm{ in } M.\]
\end{exe}

\begin{exe}
Let $\psi\in \mathcal A$ with
\[\psi(r):=\begin{cases}
r  &  \,\,  \textrm{if} \  0\leq r<1  \, , \\
e^{\sqrt r} & \ \textrm{if} \  r > 2 \, . \\
\end{cases}
\]
For any  sufficiently large $R>0$  we have
\[S(R)= \omega_m  e^{(m-1)\sqrt r}, \quad \mu(B_R) \simeq C e^{(m-1)\sqrt R}[(m-1)\sqrt R -1],\]
for some $C>0$. Note that $\bar \lambda(M)=0$ by \eqref{e20},
since
$$\limsup_{R\to\infty}\frac{\log (\mu(B_R))}{R}=0.$$
Let
\[\eta = \frac{m-1}{\beta}=(m-1)(\sigma-1),\quad
\theta=\frac{\sigma+1}{\sigma-1}\] and define
\[V(x)\equiv V(r):= e^{\eta \sqrt r}(1+r)^{-\frac{\theta}{\beta}} \quad \textrm{ for all } x\in M.\]
Then for $\eps>0$ small enough and $R>0$ sufficiently large we have
\begin{equation}\label{e8}
\int_{B_R} V^{-\beta+\eps}(x)\, d\mu \geq C \int_2^R e^{\eps
\eta\sqrt{r}}(1+r)^{\theta\pa{1-\eps/\beta}}\,dr\geq
Ce^{\eps\eta\sqrt{R}}.
\end{equation}
On the other hand
\begin{equation}\label{e8bis}
\int_{B_R} V^{-\beta-\eps}(x)\, d\mu \leq C \int_0^R e^{-\eps
\eta\sqrt{r}}(1+r)^{\theta\pa{1+\eps/\beta}}\,dr\leq C
R^{\theta+1+\frac{\theta}{\beta}\eps}=CR^{\alpha+\frac{\theta}{\beta}\eps}.
\end{equation}

Observe that, in view of \eqref{e8}, neither condition
\eqref{EQ_hp1} nor the first inequality in condition \eqref{EQ_hp2}
is satisfied. On the other hand, by \eqref{e8bis} the second
inequality in \eqref{EQ_hp2} holds. This is essentially due to the
choice of $V$. Note moreover that only the second inequality in
\eqref{e32} is not satisfied, while the first inequality in
\eqref{e32} and \eqref{e33} hold.

Note that for any $r_0>0$
\begin{equation}\label{e9}
\int_{r_0}^\infty\frac{d\xi}{S(\xi)}<\infty\,;
\end{equation}
moreover, for $r>0$  sufficiently large,
\[\gamma(r) := \int_r^\infty \frac{d\xi}{S(\xi)} \simeq C e^{-(m-1)\sqrt r} \sqrt r.\]
Hence
\begin{equation}\label{e10}
\int_{r_0}^\infty[\gamma(r)]^\sigma V(r) S(r) dr  \leq
C\int_{r_0}^\infty e^{[-(m-1)(\sigma-1)+\eta]\sqrt
r}r^{\frac{\sigma}2-\frac{\theta}{\beta}}<\infty.
\end{equation}
In view of \eqref{e9}-\eqref{e10}, from Proposition \ref{propGS}
with $A(r)=S(r)$ and $B(r)=S(r)V(r)$, we have that there exists a
solution $y=y(r)$ of \eqref{e7}, for some $R_0>0$. Furthermore,
$y(r)>0$ for all $r\in [R_0, \infty)$ and $y(r) \sim \gamma(r)$ as
$r\to \infty$. Since $\bar \lambda(M)=0$ and $V>0$ on $M$, by the
same arguments as in Example \ref{exe1} above, we can construct
$u\in C^1(M)$ with $u=u(r)>0$ on $M$, which is a weak solution of
\[\Delta u + V u^\sigma \leq 0 \quad
\textrm{ in } M.\]
\end{exe}

\bibliographystyle{plain}

\bibliography{BiblioMMPFin}
\end{document}